\newcommand{\bC}{\mathbb{C}}
\newcommand{\bR}{\mathbb{R}}
\newcommand{\mcH}{\mathcal{H}}
\newcommand{\cF}{\mathcal{F}}
\newcommand{\cE}{\mathcal{E}}
\newcommand{\inv}{\mathrm{inv}}
\newtheorem{thm}{Theorem}[section]
\newtheorem{prop}[thm]{Proposition}
\newtheorem{defn}[thm]{Definition}
\newtheorem{cor}[thm]{Corollary}
\newtheorem{rem}[thm]{Remark}
\newtheorem{lem}[thm]{Lemma}
\newtheorem{defn-prop}[thm]{Definition-Proposition}
\begin{document}

\title{K\"{a}hler structures for holomorphic submersions}
\author{Chi Li}
\date{}

\maketitle

\abstract{
In this short paper, for any holomorphic submersion $\pi: X\rightarrow B$, we derive a criterion for $X$ to have K\"{a}hler structures. This criterion generalizes Blanchard's criterion for a special class of isotrivial holomorphic submersions. We use this criterion to answer a question of Harvey-Lawson in the case of fiber dimension one. As the main application, we prove that the existence of Hermitian-Symplectic structures on certain class of holomorphic submersions with K\"{a}hler fibers and K\"{a}hler bases implies that the total spaces are K\"{a}hler.  This class includes isotrivial submersions and torus fibrations. 
  }

\tableofcontents

\section{Introduction and main results}

Let $X, B$ be compact complex manifolds. Assume that there is a holomorphic submersion $\pi: X\rightarrow B$. 
We want to derive a criterion for $X$ to be a K\"{a}hler manifold.
 
If $X$ admits a K\"{a}hler metric with the associated K\"{a}hler form denoted by $Q$. Then $Q$ is a closed $(1,1)$-form which defines the K\"{a}hler class $[Q]\in H^2(X,\bR)$. 
In this paper, we will work with K\"{a}hler forms and do not distinguish them from the corresponding
K\"{a}hler metrics. 
Let $f_b: X_b\rightarrow X$ denote the embedding of the fiber $X_b=\pi^{-1}(b)$. 
The restriction $Q|_{X_b}=f_b^*Q$ is a K\"{a}hler metric on $X_b$ for any $b\in B$. On the cohomology level, we have natural restriction homomorphisms for $k\in \{0, \dots, \dim X\}$:
\begin{equation}\label{eq-fb*}
f_b^*: H^k(X, \bR)\rightarrow H^k(X_b, \bR).
\end{equation}
So we have the first necessary condition:

\vskip 1mm
\noindent
$\mathbf{(Condition\; I)}$ There is a class $[Q]\in H^{2}(X, \bR)$ that restricts to be a K\"{a}hler class on each fiber $X_b=\pi^{-1}(b)$, $b\in B$. 
\vskip 1mm

Moreover one can show that if $Q$ is a K\"{a}hler form on $X$, then a K\"{a}hler form on $B$ can be constructed as $\int_{X/B}Q^{n+1}$ where $\int_{X/B}$ denotes the operation of integration along fibers. So we get the second necessary condition:

\vskip 1mm
\noindent
$\mathbf{(Condition\; II)}$ $B$ is a K\"{a}hler manifold.

\vskip 1mm

The main result of this paper is that these two conditions are also sufficient:
\begin{thm}\label{thm-main1}
In the above setting of holomorphic submersions, there is a K\"{a}hler metric on $X$ if and only if $\mathbf{(Condition\; I)}$ and $\mathbf{(Condition\; II)}$ are both satisfied. 
\end{thm}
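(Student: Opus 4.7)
Necessity of $\mathbf{(Condition\; I)}$ and $\mathbf{(Condition\; II)}$ has been established in the discussion above, so I focus on sufficiency. My plan is to produce a K\"ahler form on $X$ of the shape
\[
\omega_X \;=\; \omega \;+\; K\,\pi^{*}\omega_{B},
\]
where $\omega_B$ is a K\"ahler form on $B$ (existing by $\mathbf{(Condition\; II)}$), $K>0$ is a large constant, and $\omega$ is a closed real $(1,1)$-form on $X$ whose restriction to each fiber $X_b$ is K\"ahler. Granting such an $\omega$, the final positivity is a standard splitting/compactness argument: choose a $J$-invariant subbundle $H\subset TX$ complementary to the vertical bundle $V=\ker d\pi$ (using a background Hermitian metric); on vectors in $V$ the form $\omega$ is positive, on vectors in $H$ the form $K\pi^{*}\omega_{B}$ is positive and dominates for $K$ large, and mixed cross terms are absorbed uniformly by compactness of $X$.

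The real work is the construction of $\omega$. I would first use $\mathbf{(Condition\; I)}$ together with fiberwise Hodge theory to produce a smooth family of K\"ahler forms $\{\omega_b\}_{b\in B}$ with $\omega_b\in[Q]|_{X_b}$: since $\pi$ is a proper holomorphic submersion it is $C^{\infty}$ locally trivial by Ehresmann's theorem, so K\"ahler metrics $g_b$ on the fibers $X_b$ can be chosen smoothly in $b$, and $\omega_b$ can be taken to be the $g_b$-harmonic representative of $[Q]|_{X_b}$. Next, pick any closed real $2$-form $\Omega_{0}$ representing $[Q]$. Since $\Omega_{0}|_{X_b}-\omega_b$ is $d$-exact on the K\"ahler fiber $X_b$ with Hodge-theoretic primitive $\beta_b=d^{*}G_b(\Omega_{0}|_{X_b}-\omega_b)$ depending smoothly on $b$, one can assemble a global real $1$-form $\beta$ on $X$ with $\beta|_{X_b}=\beta_b$ by a partition of unity argument on $B$ combined with a smooth vertical/horizontal splitting of $T^{*}X$. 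Then $\omega':=\Omega_{0}-d\beta$ is closed, represents $[Q]$, and satisfies $\omega'|_{X_b}=\omega_b$ for every $b\in B$.

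The step I expect to be the main obstacle is upgrading $\omega'$ to a closed form of type $(1,1)$ with the same fiberwise K\"ahler restrictions. Writing $\omega'=(\omega')^{2,0}+(\omega')^{1,1}+(\omega')^{0,2}$, the fact that $\omega'|_{X_b}$ is of type $(1,1)$ forces $(\omega')^{2,0}|_{X_b}=0$, so $(\omega')^{2,0}$ lies in the ideal $\pi^{*}\Omega^{1,0}_{B}\wedge\Omega^{1,0}_{X}$ and is $\partial$-closed (by $d\omega'=0$). Killing it by adding $d\gamma$ for a real $1$-form $\gamma$ reduces to solving the $\partial$-equation $\partial\gamma^{1,0}=-(\omega')^{2,0}$, the corresponding $(0,2)$ equation then being automatic by reality. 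On an arbitrary compact complex manifold this equation is obstructed, but here $(\omega')^{2,0}$ is fiberwise zero, so morally it descends to an object on $B$; my plan is to resolve it by combining the fiberwise $\partial\bar\partial$-lemma on the K\"ahler fibers with Hodge theory on the K\"ahler base, which is where $\mathbf{(Condition\; II)}$ enters essentially. After adjusting the resulting form by a further global $i\partial\bar\partial\varphi$ if needed to reinstate the Kähler property of the fiber restrictions, one obtains $\omega$, and the first paragraph then completes the proof. Should the direct $(1,1)$-upgrade prove too rigid, an alternative is to pass through a Hermitian-Symplectic structure as an intermediate object, consistent with the framework the abstract advertises.
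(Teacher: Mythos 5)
Your outline of the easy parts (necessity, the fiberwise normalization via a smooth family of harmonic representatives and a partition of unity, and the final positivity of $\omega+K\pi^*\omega_B$) matches the paper, and you have correctly located the hard step. But the way you propose to carry out that step has a genuine flaw. You fix a closed $2$-form $\omega'$ in the given class $[Q]$ and try to make it of type $(1,1)$ by adding $d\gamma$, which forces you to solve $\partial\gamma^{1,0}=-(\omega')^{2,0}$, i.e.\ requires $(\omega')^{2,0}$ to be $\partial$-exact. This is obstructed by the $(2,0)$-Hodge component of the class $[Q]$ itself, and that component need not vanish under $\mathbf{(Condition\; I)}$: take $X=B\times F$ with $B$ a K3 surface, $F=\bP^1$, and $[Q]=[\omega_F]+\pi_B^*[\eta+\bar\eta]$ for a nonzero holomorphic $2$-form $\eta$ on $B$. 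This class restricts to a K\"ahler class on every fiber, both conditions hold, $X$ is K\"ahler, yet $(\omega')^{2,0}$ represents the nonzero class $\pi_B^*\eta$ in $H^{2,0}_\partial(X)$ and your equation has no solution. The lesson is that one cannot insist on staying in the class $[Q]$: only its fiberwise restriction matters, and the closed $(1,1)$-form one produces will in general differ from $[Q]$ by classes in the Leray filtration $\cF_L^1H^2(X,\bR)$. This is why the paper runs the construction in the opposite order: it starts from a globally defined $(1,1)$-form (built from local K\"ahler forms by a partition of unity, so the type is correct from the outset but the form is not closed) and then kills $d\omega$ degree by degree in the Leray filtration.

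Even after repairing this, the justification you give for solvability --- ``fiberwise $\partial\bar\partial$-lemma on the fibers combined with Hodge theory on the base'' --- omits the essential tool. The genuinely hard components of the obstruction are the mixed ones, with one base direction and one fiber direction; these do not descend to forms on $B$ but to $R^1\pi_*\bR$-valued forms on $B$, i.e.\ they live in the Leray term $E_2^{2,1}=H^2(B,R^1\pi_*\bR)$. Handling them requires Deligne's Hodge theory for the polarized variation of Hodge structure $R^1\pi_*\bR$ (the decomposition $D=\partial'+\bar\partial'+\theta+\bar\theta$ of the Gauss--Manin connection and the $D'D''$-lemma, Lemma \ref{lem-D'D''}), which is Steps 3--4 of the paper's proof and its technical core; ordinary Hodge theory on $B$ only disposes of the part pulled back from the base (Step 5). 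Finally, your fallback of ``passing through a Hermitian-Symplectic structure as an intermediate object'' is circular here: as Remark \ref{rem-11case} points out, starting from a closed $2$-form of mixed type only yields an HS form, and upgrading HS to K\"ahler for submersions is precisely what is deduced \emph{from} Theorem \ref{thm-main2}, not an independent route to it.
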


Note that if we know a priori that $[Q]\in H^2(X, \bR)$ is represented by a closed $(1,1)$-form, then the result can be proved quite easily by applying a family version of $\partial\bar{\partial}$-lemma. See Remark \ref{rem-11case}. However here we do not assume this condition. Indeed, the main goal is to construct such a closed $(1,1)$-form. In this aspect, one could think Theorem \ref{thm-main1} as a K\"{a}hler correspondent to Thurston's well-known construction of symplectic structures for symplectic fibrations (\cite{Thu76}, \cite[Theorem 4.91]{FOT08}). 

Let $X_b=\pi^{-1}(b)$ (with $b\in B$) be any fibre of $\pi$. There is a natural monodromy action of $\pi_1(B)$ on $H^k(X_b, \bR)$ defined by translating cohomology classes. If we denote by $H^k(X_b, \bR)^{\inv}$ the set of elements in $H^k(X_b, \bR)$ that are invariant under the monodromy action, then we know that the image of the restriction morphism $f_b^*$ in \eqref{eq-fb*} is contained in $H^k(X_b, \bR)^{\inv}$. In the simplest situation when the fiber has complex dimension 1, we have $H^{2}(X_b, \bR)^{\inv}=H^{1,1}(X_b, \bR)$ and Theorem \ref{thm-main1} can answer positively a question of Harvey-Lawson (see Theorem \ref{cor-HL}).

On the other hand, we have the Leray spectral sequence $\{E^{p,q}_r, d_r: E^{p,q}_r\rightarrow E^{p+r,q-r+1}_r\}$ that converges to $\{ Gr_{\cF_L}^p H^{p+q}(X, \bR)\} $ with respect to the Leray filtration $\cF_L$. 
There are isomorphisms:
\begin{equation}\label{eq-E2inv}
E^{0,k}_2=H^0(B, R^k\pi_*\bR)\cong H^k(X_b, \bR)^{\inv}.
\end{equation}
According to Leray spectral sequence, a class $x\in E^{0,k}_2$ extends to be a class in $H^k(X, \bR)$ if and only if $d_r x=0$ for $r\in \{2,\dots, k+1\}$ (see Section \ref{sec-Leray} for more explanations).
With these notations, there is a more refined version of Theorem \ref{thm-main1}. 
\begin{thm}\label{thm-main2}
There is a K\"{a}hler metric on $X$ if and only if the following conditions are all satisfied:

\noindent
$\mathbf{(Condition\; Ia)}$
There is an element $[\omega]\in H^0(B, R^2\pi_*\bR)$ restricts to be a K\"{a}hler class on $X_{b}$ for any $b\in B$. 

\noindent
$\mathbf{(Condition\; Ib)}$
$d_2 [\omega]=0$ in $E^{2,1}_2=H^2(B, R^1\pi_*\bR)$. 

\noindent
$\mathbf{(Condition\; II)}$
$B$ is K\"{a}hler. 
\end{thm}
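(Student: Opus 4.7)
I would deduce Theorem~\ref{thm-main2} from Theorem~\ref{thm-main1} by way of the Leray spectral sequence, using a Lefschetz argument on fibers to force the automatic vanishing of $d_3[\omega]$.

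\textbf{Necessity.} Assume $X$ admits a K\"{a}hler form $Q$. Condition II follows from $\int_{X/B} Q^{n+1}$ being K\"{a}hler on $B$, as recalled in the introduction. Let $[\omega] \in E^{0,2}_\infty \subseteq E^{0,2}_2$ be the image of $[Q] \in H^2(X, \bR)$ under the edge homomorphism. Since $H^2(X,\bR) \to H^2(X_b,\bR)$ factors through the edge map, $[\omega]$ restricts on each fiber to $[Q|_{X_b}]$, which is K\"{a}hler, giving $\mathbf{(Condition\; Ia)}$; the inclusion $E^{0,2}_\infty \subseteq \ker d_2$ gives $\mathbf{(Condition\; Ib)}$.

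\textbf{Sufficiency.} Assume $\mathbf{(Ia)}, \mathbf{(Ib)}, \mathbf{(II)}$. I plan to verify $\mathbf{(Condition\; I)}$ of Theorem~\ref{thm-main1} and then invoke that theorem. Any lift of $[\omega]$ to a class $[Q] \in H^2(X,\bR)$ restricts on each fiber to $[\omega]|_{X_b}$, hence to a K\"{a}hler class; by surjectivity of the edge map $H^2(X,\bR) \twoheadrightarrow E^{0,2}_\infty$, such a lift exists if and only if $[\omega] \in E^{0,2}_\infty$, i.e., if and only if $d_r[\omega]=0$ for $r = 2, 3$. The $r=2$ case is $\mathbf{(Ib)}$; the crux is showing $d_3[\omega]=0$ automatically. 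Let $n = \dim_{\bC} X_b$. Since $R^{2n+2}\pi_*\bR = 0$, the class $[\omega]^{n+1}$ vanishes in $E^{0, 2n+2}_2$, so $d_3([\omega]^{n+1}) = 0$. Multiplicativity of the Leray spectral sequence and the graded Leibniz rule (using $\mathbf{(Ib)}$) give $d_3([\omega]^{n+1}) = (n+1)[\omega]^n \cdot d_3[\omega]$, whence $[\omega]^n \cdot d_3[\omega] = 0$ in $E^{3, 2n}_3$. It would then suffice to show that the cup-product operator $L^n := \cdot\, [\omega]^n \colon E^{3,0}_3 \to E^{3, 2n}_3$ is injective. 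On the $E_2$-page, $L^n$ is induced by the sheaf map $\bR_B = R^0\pi_*\bR \to R^{2n}\pi_*\bR \cong \bR_B$ of fiberwise cup product with $[\omega]_b^n$, which under the orientation identification is multiplication by the positive constant $\int_{X_b}[\omega]_b^n$ (hard Lefschetz identifies $H^0(X_b)$ with $H^{2n}(X_b)$); hence $L^n \colon E^{3,0}_2 \to E^{3, 2n}_2$ is an isomorphism.

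\textbf{Main obstacle.} The delicate part is the descent of this $E_2$-isomorphism to an injection on $E_3$. Using $d_2[\omega]^n = 0$ and $E^{*, 2n+1}_2 = 0$, one verifies that $L^n$ commutes with $d_2$ and annihilates the image of $d_2 \colon E^{1,1}_2 \to E^{3,0}_2$ (in fact, this forces the image to vanish, so $E^{3,0}_3 = E^{3,0}_2$), while $E^{3, 2n}_3 = \ker(d_2 \colon E^{3, 2n}_2 \to E^{5, 2n-1}_2)$. The $E_2$-isomorphism thus descends to an injection $L^n \colon E^{3,0}_3 \hookrightarrow E^{3, 2n}_3$, yielding $d_3[\omega] = 0$ and, via Theorem~\ref{thm-main1}, the K\"{a}hlerness of $X$. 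The careful bookkeeping of the multiplicative structure of the Leray spectral sequence in this descent step is the formal heart of the argument.
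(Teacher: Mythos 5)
There is a genuine gap, and it is structural: your proof is circular relative to the logical architecture of the paper. You deduce Theorem \ref{thm-main2} from Theorem \ref{thm-main1}, but the paper has no independent proof of Theorem \ref{thm-main1} --- it is obtained as a corollary of Theorem \ref{thm-main2} (Condition I trivially implies Conditions Ia and Ib, since the image of $f_b^*$ lands in $E^{0,2}_\infty\subseteq \ker d_2$). The entire analytic content of both theorems is the construction in Section \ref{sec-main}: starting from a $(1,1)$-form $\omega$ glued by a partition of unity, one kills the graded pieces of $d\omega$ in $\cF_L^p\cE^3(X)$ for $p=1,2,3,4$ successively, using the Gauss--Manin description of $d_1$ and $d_2$, Deligne's Hodge theory for the polarized variations $R^1\pi_*\bR$ and $R^2\pi_*\bR$, and the $D'D''$-lemma (Lemma \ref{lem-D'D''}). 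None of this appears in your proposal. Note also that even granting that $[\omega]$ lifts to a class in $H^2(X,\bR)$, that class is a priori only a de Rham class, not one represented by a closed $(1,1)$-form; as Remark \ref{rem-11case} emphasizes, producing such a representative is precisely the point, so ``invoke Theorem \ref{thm-main1}'' cannot be a black box here.

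That said, the part of your argument that is genuinely yours --- the automatic vanishing of $d_3[\omega]$ from Conditions Ia and Ib via multiplicativity of the Leray spectral sequence --- is correct and worth keeping. The Leibniz computation $(n+1)[\omega]^n\cdot d_3[\omega]=d_3([\omega]^{n+1})=0$ together with the injectivity of $L^n\colon E^{3,0}_3\to E^{3,2n}_3$ (which you correctly reduce to the positivity of $\int_{X_b}[\omega]_b^n$ and the vanishing $E^{1,2n+1}_2=E^{5,-1}_2=0$) is essentially the same Deligne-style degeneration trick the paper uses in Proposition \ref{prop-HSd2} for the Hermitian--Symplectic case. So your write-up correctly establishes that Conditions Ia, Ib, II imply Condition I and II, i.e., that the two theorems have equivalent hypotheses; what it does not do is prove either of them. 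To repair the proof you would need to supply the construction of the closed $(1,1)$-form, which is where Lemma \ref{lem-D'D''} and the polarized variation of Hodge structures enter.
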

The conditions of Theorem \ref{thm-main2} is a priori weaker than the conditions of Theorem \ref{thm-main1}. In other words, Theorem \ref{thm-main2} implies \ref{thm-main1} (the ``if" direction). 

Theorem \ref{thm-main2} generalizes an old result of Blanchard in \cite{Bla56} who considered a special class of isotrivial holomorphic submersions. Here a holomorphic submersion is called \textit{isotrivial} if all fibers are isomorphic as complex manifolds.
Note that by a Theorem of Fischer-Grauert \cite{FG65}, isotrivial holomorphic submersions are always locally trivial and are sometimes called analytic fiber bundles.  In fact, in proving the criterion in \cite{Bla56} Blanchard not only assumed isotriviality but also that $\pi_1(B)$ acts trivially $H^1(X_b)$.  

$\mathbf{(Conditions\; I)}$ appeared in \cite[Remark 4.16]{Voi02}, where it is pointed out that Deligne's theorem on the degeneration of Leray spectral sequence at the $E_2$-page for projective submersions can be proved under this assumption.
This condition also appeared in \cite{Nak99} where it is called \textit{cohomologically K\"{a}hler}. In fact,  in the special case of torus fibrations Theorem \ref{thm-main1} and Theorem \ref{thm-main2} are contained in \cite[Proposition 2.17]{Nak99} which was proved there by using a precise description of torus fibrations in terms of variation of Hodge structures of weight 1. 

Our motivation for proving Theorem \ref{thm-main1} comes from a question of Li-Zhang \cite{LZ09} and Streets-Tian \cite{ST10} about the existence of Hermitian-Symplectic (HS) structures on non-K\"{a}hler complex manifolds. By definition, a Hermitian-Symplectic structure on a complex manifold $X$ with the integrable almost complex structure $J$ is a symplectic structure $\omega$ on $X$
that tames $J$ which means that $\omega(v, Jv)>0$ for any $v\neq 0\in T_\bR X$. Any K\"{a}hler form $\omega$ on $(X, J)$ is Hermitian-Symplectic because in this case $\omega$ not just tames $J$ but is also compatible with $J$ which means that $\omega(Jv, Jw)=\omega(v, w)$ for any $v,w\in T_\bR X$. Li-Zhang and Streets-Tian asked whether there are examples of HS structure on non-K\"{a}hler complex manifolds. \footnote{In real dimension 4, Donaldson had a similar question about tamed/compatible symplectic structures but without assuming the integrability of complex structures.} The question is still open in general, though there are negative results which say that there are no such examples among complex surfaces (\cite{LZ09, ST10}), nilmanifolds with invariant complex structures (\cite{EFV12}), twistor spaces (\cite{Ver14}), Moishezon manifolds and complex manifolds of Fujiki class (\cite{Pet86, Chi14}). See also \cite{DP20, Ye22} for some analytic approach to the general problem. 
We consider this question for complex manifolds that admit structures of holomorphic submersions. As in the K\"{a}hler case, it is easy to see that the existence of HS structures for holomorphic submersions forces the fibers and the base to be HS. So if we are looking for such examples of lowest dimension, it makes sense to assume that the base and fibers are all K\"{a}hler. 
Even though such an example is currently missing to us, we will show that HS submersions always satisfy $\mathbf{(Condition\; Ib)}$ (Proposition \ref{prop-HSd2}). By applying criterion in Theorem \ref{thm-main2}, we can then derive a K\"{a}hlerian criterion for HS submersions  (Proposition \ref{prop-HSKahler}) and rule out several classes of (non-trivial) examples:
\begin{thm}\label{thm-HSfiber}
Let $\pi: X\rightarrow B$ be a holomorphic submersion with K\"{a}hler fibers and a K\"{a}hler base. 
Assume that there is a Hermitian-Symplectic structure on $X$. Let $F$ denote a fiber of $\pi$. 
Then $X$ must be K\"{a}hler if one of the following conditions is satisfied:
\begin{enumerate}
\item The holomorphic submersion is isotrivial. 
\item The fibers of $\pi$ are complex tori (of possibly varying complex structures). 
\item The monodromy action of $\pi_1(B)$ on $H^2(F)$ is trivial. 
\item $H^{2,0}(F)=0$. 
\end{enumerate}
\end{thm}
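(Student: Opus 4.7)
My plan is to invoke Proposition \ref{prop-HSKahler} (the Kählerian criterion for HS submersions) and to reduce to verifying Condition Ia. Since Condition II (Kähler base) is assumed and Condition Ib follows from Proposition \ref{prop-HSd2} via the HS hypothesis, the remaining task is to exhibit a class $[\omega_0] \in H^0(B, R^2\pi_*\bR)$ whose restriction to every fiber $X_b$ is a Kähler class.

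The natural candidate is $\alpha$, the image of $[\omega] \in H^2(X, \bR)$ under the Leray edge map, which satisfies $\alpha|_{X_b} = [\omega|_{X_b}]$ for every $b$. The uniform key input is that, since $\omega|_{X_b}$ is closed and tames $J_b$, on any complex analytic subvariety $V \subset X_b$ of dimension $p$ only the $(p,p)$-part of $(\omega|_{X_b})^p$ contributes to integration, giving
\[
\int_V (\omega|_{X_b})^p = \int_V \bigl((\omega|_{X_b})^{1,1}\bigr)^p > 0.
\]
Combined with the Demailly--P\u{a}un characterization of the Kähler cone of the compact Kähler manifold $X_b$ and a straight-line interpolation to a fixed Kähler class on $X_b$ (which remains taming, and hence positive on all analytic cycles, along the entire segment), this will place $[\omega|_{X_b}]$ in the Kähler cone \emph{as soon as it is known to lie in $H^{1,1}(X_b, \bR)$}.

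The proof therefore reduces, in each case, to verifying that the $(2,0)$-Dolbeault projection of $[\omega|_{X_b}]$ vanishes on every fiber, possibly after a canonical monodromy-equivariant modification of $\alpha$. In \textbf{Case (4)} this is immediate since $H^{2,0}(F) = 0$ forces $H^2(X_b, \bR) = H^{1,1}(X_b, \bR)$. In \textbf{Case (1)}, Fischer--Grauert local triviality and the fact that $\mathrm{Aut}^0(F)$ acts trivially on cohomology make the monodromy factor through the discrete quotient $\pi_0(\mathrm{Aut}(F))$, permitting a direct matching or averaging construction of a monodromy-invariant $(1,1)$-class. In \textbf{Case (2)}, Nakamura's description of torus fibrations via weight-one variations of Hodge structures expresses the Hodge bundle $\mathcal{H}^{2,0}$ as the alternating square of the underlying weight-one data, enabling a direct computation of the $(2,0)$-projection of $\alpha$. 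In \textbf{Case (3)}, trivial monodromy on $H^2(F)$ identifies $H^0(B, R^2\pi_*\bR)$ with $H^2(F, \bR)$, and combining Griffiths transversality for the variation of Hodge structures with the flatness of $\alpha$ as a real section constrains its $(2,0)$-projection as a smooth section of $\mathcal{H}^{2,0}$. The main obstacle lies precisely in Cases (1)--(3), where $H^{2,0}(F)$ may be nonzero: in each one must extract a \emph{pointwise} Hodge-type conclusion from the \emph{global} HS hypothesis by carefully exploiting the case-specific fiber geometry, after which the Demailly--P\u{a}un step uniformly completes the argument.
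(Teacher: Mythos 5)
Your high-level strategy---reduce to Proposition \ref{prop-HSKahler} via Proposition \ref{prop-HSd2} and verify that a suitable class in $H^0(B,R^2\pi_*\bR)$ is fiberwise K\"ahler---is indeed the paper's strategy, and your treatment of Case (4) is correct and matches the paper. But the reduction you propose for the remaining cases, namely ``show that the $(2,0)$-Dolbeault projection of $[\omega|_{X_b}]$ vanishes on every fiber,'' is not the right target and in general cannot be achieved: a Hermitian-Symplectic form $Q$ may restrict to a fiber class with nonzero $(2,0)$-component (e.g.\ $Q=\omega_K+\eta+\bar{\eta}$ with $\eta$ a closed holomorphic $2$-form), so in Cases (1)--(3), where $H^{2,0}(F)$ may be nonzero, nothing forces that vanishing. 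What is actually needed---and what the paper proves---is that the fiberwise $(1,1)$-component $[Q|_{X_b}]^{1,1}$, which Proposition \ref{prop-LZ} already guarantees is a K\"ahler class on each fiber, assembles into a \emph{flat} (equivalently, monodromy-invariant) section of $R^2\pi_*\bR$. Your phrase ``possibly after a canonical monodromy-equivariant modification of $\alpha$'' points at exactly this issue, but none of your three case-specific sketches supplies the argument, and the ``Demailly--Paun plus interpolation'' step merely re-derives Proposition \ref{prop-LZ}, which the paper already has.

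Concretely: in Case (1) the paper observes that isotriviality kills the Kodaira--Spencer class, hence $\theta=\bar{\theta}=0$ in \eqref{eq-Ddecomp}, so the Gauss--Manin connection preserves the Hodge decomposition and the $(1,1)$-part of the flat section $[Q]$ is again flat; your proposed ``averaging'' over $\pi_0(\mathrm{Aut}(F))$ is not available, since the monodromy image may be infinite (the salvageable version of your idea is that monodromy acts by biholomorphisms of the fixed fiber and hence commutes with the $(1,1)$-projection, but you would need to say that). In Case (2) the paper does something quite different from your sketch: it uses $Q$ itself to build an explicit polarization $\langle \xi,\bar{\eta}\rangle=\sqrt{-1}\int_{X_b}\xi\wedge\bar{\eta}\wedge Q^{n-1}$ on $V^1$, hence on $V^2=\wedge^2V^1$, and then invokes the implication $(3)\Rightarrow(2)$ of Proposition \ref{prop-HSKahler}, where polarizability plus Deligne's theory makes the $(1,1)$-component flat; a ``direct computation of the $(2,0)$-projection'' is neither carried out nor plausible, since that projection need not vanish. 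In Case (3) the assertion that Griffiths transversality ``constrains'' the $(2,0)$-projection is not an argument. So the proposal correctly locates the difficulty in Cases (1)--(3) but does not close it.
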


We end this introduction by sketching the organization of this note. In section \ref{sec-Leray}, we will review the Leray spectral sequence in terms of filtered de Rham complex, describe $d_1: E^{p,q}_1\rightarrow E^{p+1,q}_1$ in terms of Gauss-Manin connection and explain its compatibility with the exterior differential operator. In section \ref{sec-Deligne}, we will review Deligne's Hodge theory for the cohomology groups with coefficients of polarized variation of Hodge structures and derive a variant of the classical $\partial\bar{\partial}$-lemma that will be crucial for our construction. In section \ref{sec-main}, we will construct a K\"{a}hler form step-by-step which is guided by the Leray spectral sequence as more explained in more detail in Section \ref{sec-Leray}. The main line of construction is similar to the original construction of Blanchard but differs from it significantly in some steps because we are dealing with a more general situation (see Remark \ref{rem-Blacons}). Finally in section \ref{sec-HS}, we will apply the criterion in Theorem \ref{thm-main2} to study Hermitian-Symplectic structures for holomorphic submersions.

\textbf{Acknowledgement:} The author is grateful to Weiyi Zhang for a careful reading of the draft of this paper and providing valuable feedback and corrections. He also thanks Jeffrey Streets and Gang Tian for very helpful comments and notifying him the papers \cite{DP20, Ye22}.

\section{Preliminaries}

\subsection{Leray spectral sequences}\label{sec-Leray}

Let $\pi: X\rightarrow B$ be a differentiable fiber bundle. In this subsection, we do not consider any complex structures. It is well-known that there is a Leray spectral sequence: $\{E^{p,q}_r, d_r\}$ with 
\begin{equation*}
E^{p,q}_2=H^p(B, R^q\pi_* \bR)\Rightarrow E^{p,q}_\infty \cong Gr H^{p+q}(X, \bR). 
\end{equation*}

To prepare for the later construction, we explain this convergence of spectral sequence from the point view of filtered de Rham complex in terms of differential forms. 
In the following discussion, for any local calculations, we choose a small coordinate neighborhood $\{U, u=(u_1,\dots, u_m)\}$ of $B$ such that there is a diffeomorphism 
\begin{equation}\label{eq-lctriv}
\pi^{-1}(U)\cong U\times F.
\end{equation} 
Choose local real coordinates $y=\{y_1,\dots, y_n\}$ on $F$ such that $\pi: \pi^{-1}(U) \rightarrow U$ is given by the projection $\pi(u,y)=u$. 

Denote by $\cE^{\ell}(X)$ the space of smooth degree-$\ell$ differential forms on $X$. 
A form $\eta\in \cE^{\ell}(X)$ is locally represented by the following expression:
\begin{eqnarray}\label{eq-etalc}
\eta&=&\sum_{|I|+|K|=\ell} \eta_{I;K}(u, y) du_{I}\wedge d y_{K} =\sum_{I} du_I\wedge \eta_I
\end{eqnarray}
where, for the simplicity of notation, for an index set $I=\{i_1,\dots, i_{|I|}\}$, we denote $du_I=du_{i_1}\wedge \cdots\wedge du_{|I|}$ (similarly for $dy_K$), and $\eta_I=\sum_{|K|=\ell-|I|}\eta_{I;K}dy_K$. 

Consider the following Leray filtration:
\begin{equation*}
\cF_L^{p}\cE^\ell(X)=\left\{ \eta=\sum_{|I|+|K|=\ell} \eta_{I;K} d u_I\wedge dy_K; |I|\ge p \right\}.
\end{equation*}
It is easy to verify that under the change of coordinates $(u, y')=(u, y'(u, y))$, for any $I$ with $|I|=p$ the term $\sum_K \eta_{I;K} dy_K$ changes as tensor along any fibre $X_b=\pi^{-1}(b)$. 
In particular, the restriction $\eta_I|_{X_b}:=f_b^*\eta_I$ is a well-defined $(\ell-p)$-form on any fiber $X_b$. 

Let's recall the description of the Leray spectral sequence following \cite[p. 440-442]{GH78}:
\begin{equation}\label{eq-Erpq}
E^{p,q}_r=\frac{\{a\in \cF_L^p \cE^{p+q}(X): da\in \cF_L^{p+r}\cE^{p+q+1}(X) \}}{d(\cF_L^{p-r+1}\cE^{p+q-1}(X))+\cF_L^{p+1}\cE^{p+q}(X)}.
\end{equation}
Here the denominator is not a subgroup of the numerator and the meaning is that we take $\{\text{denominator as written}\}\cap \{\text{numerator} \}$. 
The differential $d_r: E^{p,q}_r\rightarrow E^{p+r,q-r+1}_r$ is then defined as:
\begin{equation}\label{eq-dra}
d_r [a]=[da] \in E^{p+r,q-r+1}_r=\frac{\{b\in \cF_L^{p+r}\cE^{p+q+1}(X); db\in \cF_L^{p+2r}\cE^{p+q+2}(X) \}}{d \cF_L^{p+1} \cE^{p+q}(X)+\cF_L^{p+r+1}\cE^{p+q+1}(X)}.
\end{equation} 
If a class $[\eta]\in E^{p,q}_r$
satisfies $d_r [\eta]=0$, then by \eqref{eq-dra} there exists $\chi\in \cF_L^{p+1}\cE^{p+q}$ such that $d \eta-d \chi\in \cF_L^{p+r+1}\cE^{p+q+1}$ and hence $\eta$ and $\eta'=\eta-\chi$ defines the same class in $E^{p,q}_{r}$ while $\eta'=\eta-\chi$ defines a class in $E^{p,q}_{r+1}$.

When $p=0$, we have the map
$d_r: E^{0,q}_r\rightarrow E^{r, q-r+1}_r$ and $E^{0,q}_{r+1}=\mathrm{Ker}(d_r)\subseteq E^{0,q}_r$. So we always have the inclusion $E^{0,q}_r\subseteq E^{0,q}_{r-1}\subseteq \cdots \subseteq E^{0,q}_1$. 

Consider the 2nd page (i.e. $r=2$) and denote by $V^q=R^q\pi_*\bR$ the local system with fiber $\{H^q(X_b, \bR); b\in B\}$. Then there is a natural identification:
\begin{equation}\label{eq-invsp}
E^{0,q}_2=H^0(B, V^q)=H^q(X_b,\bR)^{\inv}
\end{equation} 
where $H^0(B, V^q)$ is the space of flat sections of $V^q$ and $H^q(X_b, \bR)^{\inv}$  is the space of cohomology that is invariant under the monodromy action.  Moreover, $[\eta]\in E^{0,q}_r\subseteq E^{0,q}_2=H^0(B, V^q)$ if and only if we can find a representative $\eta$ of $[\eta]\in E^{0,q}_2$ such that $d\eta\in \cF^r \cE^{q+1}(X)$. In this case, $d_r [\eta]=0$ if and only if 
there exists $\chi\in \cF_L^1\cE^q$ such that $d(\eta-\chi)\in \cF_L^{r+1} \cE^{q+1}(X)$. 
As a consequence, if $d_r [\eta]=0$ for any $r\in \{1, \dots, q+1\}$, then the representative $\eta$ can be chosen to be a closed $q$-form and we say that $[\eta]\in H^0(B, V^q)$ extends to be a closed $q$-form on $X$. 
In other words, the subspace $E^{0,q}_\infty=E^{0,q}_{q+1}\subseteq E^{0,q}_2$ is equal to the image of the natural map 
\begin{equation}\label{eq-Fbclass}
f_b^*: H^q(X, \bR)\rightarrow H^q(X_b, \bR)^{\mathrm{inv}}=E^{0,q}_2.
\end{equation}
Note that $f_b^*$ coincides with the composition $H^q(X, \bR)\rightarrow Gr_{\cF_L}^0 H^q(X, \bR)\rightarrow E^{0,q}_2$. 
For our later purpose, we need to better understand the 1st and 2nd pages of the Leray spectral sequence.
For any $\eta=\sum_{I} du_I\wedge \eta_I \in \cF_L^{p}\cE^{p+q}(X)$ as in \eqref{eq-etalc}, we know that
for any $I$ with $|I|=p$, $\eta_I|_{X_b}$ is a closed differential form on any fibre ${X_b}$, i.e. $(d\eta_I)|_{X_b}=0$. So $[\eta_I|_{X_b}]$ defines a class in $V^q_b=H^q(X_b, \bR)$. Then the space $E_1^{p,q}$ has the following description:
\begin{eqnarray*}
 \cE^p(B, V^q)&=&\{[\eta]=[\eta]_p:=\sum_{|I|=p} du_I\otimes [\eta_I|_{X_b}]:\eta \in \cF_L^p \cE^{p+q}(X), \\
&& \hskip 3cm
  \textrm{ for any $I$ with $|I|=p$}, \eta_{I}|_F \textrm{ is closed} \}
\end{eqnarray*}
which is the space of $V^q$-valued differential forms of degree $p$. 
Next we want to relate the $d_1$-differential to the following differential operator induced by the Gauss-Manin connection.
\begin{equation}\label{eq-defD}
\def\arraystretch{1.5}
\begin{array}{rcl}
D: \cE^p(B, V^q)&\rightarrow& \cE^{p+1}(B, V^q).\\
{D[\eta]_p} & = & \sum\limits_{\substack{|I|=p\\ i\in \{1,\dots, \dim_{\bR}B\}}} (-1)^{p} \left(du_I \wedge du_i \otimes [\mathcal{L}_{\partial_{u_i}} \eta_I ]\right).
\end{array}
\end{equation}
Here $\mathcal{L}_{\partial_{u_i}}$ is the Lie derivative with respect to the vector field $\partial_{u_i}$ for $i\in\{1,\dots, m\}$. Here we lift $\partial_{u_i}$ to be vector field defined on $\pi^{-1}(U)$ under the diffeomorphism in \eqref{eq-lctriv}.  

Using the vanishing $(d\eta_I)|_{X_b}=0$, it is easy to see that the graded piece in $d\eta\in \cF_L^{p+1}/\cF_L^{p+2}$ is represented by the terms:
\begin{equation*}
\sum_{\substack{|I|=p;\\ i\in \{1,\dots, \dim_{\bR}B\}}} (-1)^p du_I\wedge du_i\wedge \mathcal{L}_{\partial_{u_i}}\eta_I+\sum_{|J|=p+1}(-1)^{p+1} du_J\wedge d\eta_J. 
\end{equation*}
Note that $d\eta_J$ in the second term is exact when restricted to any fiber. In particular, for any $\eta\in \cF_L^p\cE^{p+q}(X)$, we have:
\begin{equation}\label{eq-Dvsd}
d\eta\in \cF_L^{p+1}\cE^{p+q+1}(X) \quad \text{and} \quad 
D[\eta]_p=[d\eta]_{p+1}\in \cE^{p+1}(B, V^q).
\end{equation}

\begin{lem}\label{lem-d1vsD}
There is a natural surjective morphism:
\begin{equation}\label{eq-isom1}
\phi: \{a\in \cF_L^p \cE^{p+q}(X): da\in \cF_L^{p+2}\cE^{p+q+1}(X) \}\rightarrow \mathrm{Ker}(D: \cE^p(B, V^q)\rightarrow \cE^{p+1}(B, V^q)).
\end{equation}
which induces an isomorphism 
\begin{equation}\label{eq-isom3}
E_2^{p,q}\cong \frac{\mathrm{Ker}(D: \cE^p(B, V^q)\rightarrow \cE^{p+1}(B, V^q))}{\mathrm{Im}(D: \cE^{p-1}(B, V^q)\rightarrow \cE^{p}(B, V^q))}=H^p(B, V^q).
\end{equation}
\end{lem}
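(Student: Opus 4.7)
The plan is to define $\phi(a)=[a]_p$ and then prove in turn: (i) $\phi$ is well defined with image in $\mathrm{Ker}(D)$; (ii) $\phi$ is surjective onto $\mathrm{Ker}(D)$; (iii) the induced map on $E_2^{p,q}$ is an isomorphism with $H^p(B,V^q)=\mathrm{Ker}(D)/\mathrm{Im}(D)$. For (i), the hypothesis $da\in \cF_L^{p+2}$ implies in particular $da\in \cF_L^{p+1}$; expanded in local coordinates this forces $d_y a_I=0$ for every $|I|=p$, so each $a_I|_F$ is fiberwise closed and $[a]_p$ genuinely belongs to $\cE^p(B,V^q)$. The identity \eqref{eq-Dvsd} then yields $D[a]_p=[da]_{p+1}=0$.

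The main work is in (ii). Given $\alpha\in \mathrm{Ker}(D)$, by the very definition of $\cE^p(B,V^q)$ one may choose a lift $a_0\in \cF_L^p\cE^{p+q}(X)$ with $[a_0]_p=\alpha$; the derivation leading to \eqref{eq-Dvsd} then gives $da_0\in \cF_L^{p+1}$ and $[da_0]_{p+1}=D\alpha=0$. This last vanishing means that every coefficient $(da_0)_J$ of $du_J$ (for $|J|=p+1$) in the $\cF_L^{p+1}/\cF_L^{p+2}$-piece of $da_0$ is \emph{fiberwise exact}, not merely of trivial fiberwise class. I would then produce smoothly varying fiberwise primitives $\gamma_J$ satisfying $d_y \gamma_J=(-1)^{p+1}(da_0)_J$, set $\chi=\sum_{|J|=p+1} du_J\wedge \gamma_J \in \cF_L^{p+1}\cE^{p+q}(X)$, and take $a=a_0-\chi$. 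Then $[a]_p=\alpha$ (since $\chi\in \cF_L^{p+1}$) and $da\in \cF_L^{p+2}$ by construction. The key analytic step is the existence of the smooth family $\{\gamma_J\}$: I would obtain it by fixing a Riemannian metric on $X$ for which $\pi$ is a Riemannian submersion and invoking the fiberwise Hodge/Green operator over a trivializing cover of $B$, patched by a partition of unity on $B$. This is the main obstacle in the proof; everything else is a formal consequence of \eqref{eq-Dvsd}.

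For (iii), one checks that $\phi$ descends: elements of $\cF_L^{p+1}\cE^{p+q}(X)$ in the denominator of $E_2^{p,q}$ clearly map to zero, while for the other piece only forms $d\tau$ with $\tau\in \cF_L^{p-1}\cE^{p+q-1}(X)$ that additionally lie in $\cF_L^p$ (the relevant intersection) are involved, and this extra condition is exactly the statement that $[\tau]_{p-1}\in \cE^{p-1}(B,V^q)$, so \eqref{eq-Dvsd} at level $p-1$ yields $\phi(d\tau)=D[\tau]_{p-1}\in \mathrm{Im}(D)$. Surjectivity of the induced map is (ii). For injectivity, if $\phi(a)=D\sigma$, lift $\sigma$ to $\tau\in \cF_L^{p-1}\cE^{p+q-1}(X)$ with $[\tau]_{p-1}=\sigma$; then $[a-d\tau]_p=\phi(a)-D\sigma=0$, so $a-d\tau\in \cF_L^{p+1}\cE^{p+q}(X)$ and hence $a\in d\cF_L^{p-1}\cE^{p+q-1}(X)+\cF_L^{p+1}\cE^{p+q}(X)$, which kills $[a]$ in $E_2^{p,q}$.
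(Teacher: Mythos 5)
Your proof is correct and follows essentially the same route as the paper's: define $\phi(a)=[a]_p$, use \eqref{eq-Dvsd} for well-definedness, repair a lift of a $D$-closed class by subtracting $\sum_J du_J\wedge\gamma_J$ built from fiberwise primitives of the $(da_0)_J$ glued by a partition of unity on $B$ to get surjectivity, and identify $\phi^{-1}(\mathrm{Im}(D))$ with $d\cF_L^{p-1}\cE^{p+q-1}(X)+\cF_L^{p+1}\cE^{p+q}(X)$ via the same $[a-d\tau]_p=0$ computation. The only cosmetic difference is that you justify the smooth dependence of the fiberwise primitives on the base point via the fiberwise Green operator, a point the paper simply asserts.
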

\begin{proof}
For simplicity of notation, denote the left-hand-side and the right-hand-side of \eqref{eq-isom1} by $\mathcal{V}_1$ and $\mathcal{V}_2$ respectively. We also denote $\ell=p+q$. 

If $\eta\in \mathcal{V}_1$, then $d\eta\in \cF_L^{p+2}\cE^{\ell+1}(X)$ and hence $D[\eta]_p=[d\eta]_{p+1}=0$. So we get the natural morphism $\phi$ that maps $\eta$ to $[\eta]_p$. 
To see that this map is surjective, choose any $[\eta]\in \mathcal{V}_2$ with $\eta\in \cF_L^p\cE^\ell(X)$ such that $\eta_I|_{X_b}$ is closed for any fibre $X_b$. Then $[d\eta]_{p+1}=D[\eta]_p=0$ implies that $(d\eta)_J|_{X_b}$ is exact for any $J$ with $|J|=p+1$ and any fiber $X_b$. Choose a coordinate system $\{(U_k, \{u^{(k)}_i\}\}$ of $B$ and a partition of unity $\{\rho_k\}$ subordinate to the covering $\{U_k\}$. For each $b\in U_k$ and any $J$ with $|J|=p+1$, we can find $\chi_{J,b}$ on $X_b$ such that $(d\eta)_J|_{X_b}=d \chi^{(k)}_{J,b}$ and $\chi^{(k)}_{J,b}$ depends smoothly on $b\in U_k$ for any fixed $U_k$. Because any fiber is diffeomorphic to a fixed smooth manifold $F$, the family $\{\chi^{(k)}_{J,b}; b\in U_k\}$ can be considered as a smooth form $\chi^{(k)}_J$ on $\pi^{-1}(U_k)\cong U_k\times F$ (a smooth diffeomorphism). 
Define 
\begin{equation*}
\eta'=\eta-\sum_k \rho_k\sum_{|J|=p+1} (-1)^{p+1} du_J^{(k)}\wedge \chi^{(k)}_J. 
\end{equation*} 
Then $\eta'$ satisfies $\eta'-\eta\in \cF_L^{p+1}\cE^{p+q}$ and $(d\eta')_{J}|_{X_b}=0$ for any $J$ with $|J|=p+1$ and for any $b\in B$. In particular $\eta'\in \mathcal{V}_1 $ and satisfies $\phi(\eta')=[\eta']=[\eta]$. 

To see that $\phi$ induces the isomorphism \eqref{eq-isom3}, we need identify the kernel of $\phi$. So assume that $[\eta]=D[\chi]\in \mathrm{Im}(D: \cE^{p-1}(B, V^q)\rightarrow \cE^p(B, V^q))$. Then we can assume $\eta\in \cF_L^p\cE^{\ell}(X)$ with $d\eta\in \cF_L^{p+1}\cE^{\ell+1}(X)$ and $\chi\in \cF^{p-1}_L\cE^{\ell-1}(X)$. Then 
\begin{equation*}
[\eta]_{p}=D[\chi]_{p-1}=\sum_{|I|=p-1}(-1)^{p-1}(du_I\wedge du_i \otimes [\mathcal{L}_{\partial_{u_i}}\chi_I])=[d\chi]_{p}.
\end{equation*}
This implies $[\eta-d\chi]_p=0$ which implies $\eta\in \mathcal{V}'_1:=d \cF_L^{p-1}\cE^{\ell-1}(X)+\cF_L^{p+1}\cE^{\ell}(X)$. 
So we get the identity $\phi^{-1}(\mathrm{Im}(D))=\mathcal{V}'_1\cap \mathcal{V}_1$.
By the definition in \eqref{eq-Erpq}, $E^{p,q}_{2}=\frac{\mathcal{V}_1}{\mathcal{V}'_1\cap \mathcal{V}_1}$. So we conclude that $\phi$ induces an isomorphism.

\end{proof}

\subsection{K\"{a}hler case}

From now on, we assume that the fibers $\{X_b; b\in B\}$ and the base $B$ are all K\"{a}hler manifolds. 
Suppose that we have $\alpha\in H^0(B, V^2)\cong E^{0,2}_2$ that restricts to be a K\"{a}hler class on each fibre. 
The main problem we are considering is whether we can find a closed $(1,1)$-form $Q$ on $X$ such that $[Q]|_{X_b}=f_b^*[Q]=\alpha$, i.e. $\alpha$ is contained in the image of the natural morphism in \eqref{eq-Fbclass} for $q=2$. 
According to Theorem \ref{thm-main2}, the answer is positive if and only if $d_2(\alpha)=0$. 
In some sense, Theorem \ref{thm-main2} is a converse to the following basic result in K\"{a}hler geometry by Blanchard and Deligne:
\begin{thm}[\cite{Bla56, Del68}]
Let $X$ and $B$ be compact K\"{a}hler manifolds and $\pi: X\rightarrow B$ be a holomorphic submersion, then the Leray spectral sequence degenerates at the $E_2$-page. 
\end{thm}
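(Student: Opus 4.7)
The plan is to run Deligne's Lefschetz-theoretic argument: a Kähler form on $X$ induces a global operator $L$ on the Leray spectral sequence that commutes with every $d_r$, and hard Lefschetz applied fiberwise then forces $d_r = 0$ for all $r \geq 2$. Note that only the Kähler hypothesis on $X$ is really used; the Kählerness of $B$ does not enter.

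First I would fix a Kähler form $\omega$ on $X$, viewed as a closed element of $\cE^2(X) = \cF_L^0 \cE^2(X)$. Wedge product with $\omega$ is a degree $(0,2)$ morphism of the filtered de Rham complex that commutes with $d$ (since $d\omega=0$ and wedging preserves $\cF_L^\bullet$), so by the general formalism of \eqref{eq-Erpq}--\eqref{eq-dra} it descends to operators $L: E_r^{p,q} \to E_r^{p, q+2}$ satisfying $L d_r = d_r L$ on every page. On the $E_2$-page, $L$ is cup product with the fiberwise Kähler class $[\omega|_{X_b}] \in H^0(B, V^2)$, which is automatically a flat section of $V^2$ since it is the restriction of a global class on $X$. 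Since each fiber $X_b$ is compact Kähler of complex dimension $n$, the hard Lefschetz theorem and the flatness of the Lefschetz decomposition under the Gauss--Manin connection upgrade the pointwise decomposition to a splitting of local systems
\[
V^s = \bigoplus_{k \geq 0} L^k P^{s-2k},
\]
where $P^j \subseteq V^j$ is the primitive subsystem $\ker(L^{n-j+1}: V^j \to V^{2n-j+2})$ for $j \leq n$. Taking sheaf cohomology on $B$ yields the decomposition $E_2^{p,q} = \bigoplus_{k \geq 0} L^k H^p(B, P^{q-2k})$.

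I then induct on $r \geq 2$, assuming $d_2 = \cdots = d_{r-1} = 0$ so that $E_r = E_2$, and show $d_r = 0$. It suffices to check this on each summand: take $\alpha \in H^p(B, P^s)$ with $s = q - 2k$, so by primitivity $L^{n-s+1}\alpha = 0$. Commuting gives $L^{n-s+1}d_r\alpha = d_r(L^{n-s+1}\alpha) = 0$ in $E_r^{p+r,\ast}$. Expanding $d_r \alpha \in E_r^{p+r, s-r+1}$ via the Lefschetz decomposition as $\sum_j L^j \beta_j$ with $\beta_j \in H^{p+r}(B, P^{s-r+1-2j})$, the uniqueness of that decomposition forces $L^{n-s+1+j}\beta_j = 0$ for each $j \geq 0$ separately. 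A direct check on exponents shows that for $r \geq 2$ the power $n-s+1+j$ still lies within the range $\leq n-(s-r+1-2j)$ where $L$ is injective on primitives of that degree, so every $\beta_j$ vanishes and hence $d_r \alpha = 0$. The conceptual content is carried by the global nature of $[\omega]$ (which makes $L$ commute with $d_r$) and by hard Lefschetz on the fibers; the only step that requires care is the final exponent bookkeeping, which I expect to be the main, though purely combinatorial, obstacle.
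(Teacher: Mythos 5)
Your proof is correct and is exactly Deligne's Lefschetz-decomposition argument, which is the approach the paper intends for this cited theorem: the paper does not reprove it, but the same mechanism (the relative Lefschetz decomposition of $R^q\pi_*\bR$, commutation of $L$ with $d_r$, and fiberwise hard Lefschetz) is precisely what is adapted in Proposition \ref{prop-HSd2}. Your exponent bookkeeping does check out, though you could streamline it as the paper does in its commutative diagrams: for primitive $\alpha$ of fiber degree $s$, the operator $L^{n-s+1}$ is already a split injection of local systems on all of $R^{s-r+1}\pi_*\bR$ when $r\ge 2$, so $L^{n-s+1}d_r\alpha=0$ kills $d_r\alpha$ directly without decomposing it into primitive pieces.
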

In section \ref{sec-HS} we will use Deligne's method for proving this theorem to prove an extension result for Hermitian-Symplectic submersions (see Proposition \ref{prop-HSd2}). 

Another interesting consequence of Theorem \ref{thm-main1} is the following positive answer to a question of Harvey-Lawson in \cite{HL83}.
\begin{thm}[Corollary of Theorem \ref{thm-main1}]\label{cor-HL}
Assume that the fiber of the submersion has dimension 1. Then there exists a K\"{a}hler metric on $X$ if and only if the homology class of any fiber of $X$ is not zero. 
\end{thm}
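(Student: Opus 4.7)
The plan is to reduce Theorem \ref{cor-HL} to Theorem \ref{thm-main1} by verifying $\mathbf{(Condition\; I)}$ and $\mathbf{(Condition\; II)}$ under the nonvanishing-fiber-class hypothesis; the converse implication will be essentially immediate.

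For the forward direction, if $Q$ is a Kähler form on $X$, then $\int_{X_b}Q=\mathrm{vol}(X_b)>0$ for each $b\in B$, so by de Rham duality $[X_b]\neq 0$ in $H_2(X,\mathbb{R})$. (As a side benefit, $\int_{X/B}Q^{2}$ then produces a Kähler form on $B$, confirming also the necessity of $\mathbf{(Condition\; II)}$.)

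For the backward direction, the decisive simplification in fiber dimension $1$ is that each fiber $X_b$ is a compact Riemann surface, so $H^2(X_b,\mathbb{R})\cong\mathbb{R}$ is one-dimensional and coincides with both $H^{1,1}(X_b,\mathbb{R})$ and $H^2(X_b,\mathbb{R})^{\inv}$; both the monodromy obstruction and the Hodge-type obstruction on fibers disappear automatically. From $[X_b]\neq 0$, Poincaré duality on $X$ yields a class $\alpha\in H^2(X,\mathbb{R})$ with $\int_{X_b}\alpha\neq 0$, hence $f_b^*\alpha\neq 0$ in $H^2(X_b,\mathbb{R})$. Any nonzero real class on a Riemann surface is represented by a pointwise-definite real $(1,1)$-form, so it is a Kähler class up to an overall sign; and since the function $b\mapsto \int_{X_b}\alpha$ is continuous and nowhere zero on the connected base $B$, this sign is constant, so after possibly replacing $\alpha$ by $-\alpha$ we obtain $\mathbf{(Condition\; I)}$. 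The Kähler-ness of $B$ ($\mathbf{(Condition\; II)}$) is part of the Harvey-Lawson setting (and is in any case necessary by the forward direction), and granting it, Theorem \ref{thm-main1} then delivers a Kähler structure on $X$.

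The main obstacle is all internal to Theorem \ref{thm-main1} itself; for Theorem \ref{cor-HL} the only genuinely non-formal step is the sign-continuity argument ensuring that a single class $\alpha$ on $X$ restricts to a Kähler (as opposed to merely nonzero) class on \emph{every} fiber, together with the observation that fiber-dimension $1$ eliminates all monodromy and Hodge-filtration obstructions. I therefore expect the proof to consist essentially of packaging these elementary coincidences into the framework of Theorem \ref{thm-main1}.
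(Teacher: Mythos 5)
Your proposal is correct and follows essentially the same route as the paper: reduce to Theorem \ref{thm-main1} by observing that for one-dimensional fibers $H^2(X_b,\bR)^{\inv}=H^{1,1}(X_b)=\bR$, so that $\mathbf{(Condition\; I)}$ is equivalent (via the duality between $f_b^*$ and the fiber class $cl(X_b)=(f_b)_![1]$) to the nonvanishing of the fiber's homology class, with $\mathbf{(Condition\; II)}$ supplied by the standing assumption of the section. Your explicit sign-and-connectedness argument is a slightly more careful rendering of what the paper leaves implicit in ``induced by the natural orientation of the fibers''; the only content of the paper's proof you omit is the additional current-theoretic equivalence addressing Harvey--Lawson's Note D, which is not needed for the stated theorem.
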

In fact, when $\dim X_b=1$. 
$H^2(X_b, \bR)^{\mathrm{inv}}=H^{1,1}(X_b)=\bR$ induced by the natural orientation of the fibers. $\mathbf{(Condition\; I)}$ is satisfies if and only if the the map $f_b^*$ in \eqref{eq-Fbclass} is non-zero, which by duality is equivalent to the non-vanishing of the Gysin map $(f_b)_!: H^{0}(X_b,\bR)\cong H_2(X_b, \bR)\rightarrow H_{2}(X, \bR)\cong H^{2n-2}(X, \bR)$. This is also equivalent to the non-vanishing of the class $cl(X_b):=(f_b)_! [1]$ that is defined as the cohomology class of the fiber. On the other hand the complex subvariety $X_b$ defines the current $\{X_b\}$ of integration along $X_b$. The current $\{X_b\}$ is a closed positive current of type $(n-1, n-1)$ and its de Rham cohomology class is $cl(X_b)\in H^{n-1, n-1}(X, \bR)\subset H^{2n-2}(X, \bR)$. 
If there exists a K\"{a}hler metric on $X$, then it is to see that the current $\{X_b\}$ is not $(n-1,n-1)$-component an exact current. 
These statements together easily imply that the following equivalences, which imply \cite[Theorem 1.7]{HL83} and answer \cite[Note D]{HL83}.
\begin{equation*}
\xymatrix{
cl(X_b) \neq 0 \ar@{=>}[dr] & & \ar@{=>}[ll] \{X_b\} \text{ is not the $(n-1,n-1)$-component of an exact current }\\
& \text{K\"{a}hler}  \ar@{=>}[ur] &}
\end{equation*}



\subsection{Deligne-Hodge theory for polarized variation of Hodge structures}\label{sec-Deligne}

From now on we assume that $\mathbf{(Condition\; Ia)}$ holds true. Then the local system $V^q=R^q \pi_*\bR$ underlies a polarized variation of Hodge structures. 
In our construction of K\"{a}hler metrics for holomorphic submersions, we will need Deligne's Hodge theory for $H^p(B, V^q)$ to derive a variant of the $\partial\bar{\partial}$-lemma (Lemma \ref{lem-D'D''}). So we briefly recall this theory following Zucker's presentation in \cite{Zuc79}. 

For any $b\in B$, because the fiber $X_b$ is K\"{a}hler, we have the Hodge filtration $\{\mathscr{F}^{\ell}:=\mathscr{F}^{\ell}H^q(X_b, \bC)\}$ of $V^q_{\bC,b}:=H^q(X_b, \bC)$ that induces the Hodge decomposition for each fiber:
\begin{equation}\label{eq-fiberHodge}
V^q_{\bC,b}=H^{q}(X_b, \bC)=\bigoplus_{\ell=0}^q \mathcal{H}^{\ell, q-\ell}(X_b) \quad \text{ with } \quad \mathcal{H}^{\ell, q-\ell}(X_b)=\mathscr{F}^{\ell}\cap \overline{\mathscr{F}^{q-\ell}}. 
\end{equation}
This fiberwise Hodge decomposition in \eqref{eq-fiberHodge} defines smooth subbundles $\mathcal{H}^{\ell, q-\ell}$ of the smooth vector bundles associated to $V^q_{\bC}=V^q\otimes_{\bR}\mathbb{C}$.  If $\cE^p(V^q)=\Gamma(B, V^q_{\bC}\otimes_{\bC}\cE^p_B)$ be the smooth $V^q_{\bC}$-valued $p$-forms on $B$, then we have the decompositions:
\begin{eqnarray*}
\cE^0(V^q)=\bigoplus_{\ell=0}^q \cE^0(\mathcal{H}^{\ell, q-\ell}), \quad \cE^p(V^q)=\bigoplus_{k,\ell}\cE^{k, p-k}(\mathcal{H}^{\ell,q-\ell});\\
\cE^p (V^q)=\bigoplus_{R,S}\cE^p (V^q)^{R,S}, \quad \cE^{p}(V^q)^{R,S}=\bigoplus_{\substack{a+c=R, b+d=S\\ a+b=p, c+d=q}}\cE^{a,b}(\mcH^{c,d}).
\end{eqnarray*}
We can decompose the Gauss-Manin connection $D$ as two conjugate operators:
\begin{equation*}
D=\nabla\oplus \overline{\nabla}: \cE^0(V^q)\rightarrow \cE^{1,0}(V^q)\oplus \cE^{0,1}(V^q).
\end{equation*}
The flatness of $D$ implies that the operator $\overline{\nabla}$ satisfies the integrability condition $\overline{\nabla}^2=0$. So $\overline{\nabla}$ defines a holomorphic structure that gives rise to the associated holomorphic vector bundle $\mathscr{V}^q=V^q\otimes_{\bR}\mathcal{O}_B$.
The Hodge filtration induces a filtration of holomorphic sub-bundles $\{\mathscr{F}^{\ell}\mathscr{V}^q\}$ of $\mathscr{V}^q$. Griffiths proved that the holomorphic Gauss-Manin connection $\nabla$ satisfies the infinitesimal period relation (also called Griffiths transversality): 
\begin{equation}\label{eq-transv}
\nabla \mathscr{F}^{\ell}\mathscr{V}^q\subseteq \Omega_B^1\otimes \mathscr{F}^{\ell-1}\mathscr{V}^q. 
\end{equation}
The relation in \eqref{eq-transv} 
implies the inclusion:
\begin{equation*}
D \cE^{a,b}(\mcH^{c,d})\subseteq \cE^{a+1,b}(\mcH^{c,d})\oplus \cE^{a+1,b}(\mcH^{c-1,d+1})\oplus \cE^{a,b+1}(\mcH^{c+1,d-1}) \oplus \cE^{a, b+1}(\mcH^{c,d}).
\end{equation*}
Correspondingly, the operator $D$ decomposes as:
\begin{equation}\label{eq-Ddecomp}
\begin{array}{ll}
\partial': & \cE^{a,b}(\mcH^{c,d})\rightarrow \cE^{a+1,b}(\mcH^{c,d})\\
\theta:  & \cE^{a,b}(\mcH^{c,d})\rightarrow \cE^{a+1,b}(\mcH^{c-1,d+1})\\
\bar{\partial}': & \cE^{a,b}(\mcH^{c,d})\rightarrow \cE^{a,b+1}(\mcH^{c,d})\\
\bar{\theta}: & \cE^{a,b}(\mcH^{c,d})\rightarrow \cE^{a,b+1}(\mcH^{c+1,d-1}).
\end{array}
\end{equation}
It is well-known that $\theta$ is given by the wedge product with the Kodaira-Spencer class of the associated deformation.
More precisely, for any $b\in B$ and any $v\in T_bB$, if we denote by $\tau_b(v)\in H^1(X_b, T X_b)$ the Kodaira-Spencer class of the deformation induced by the holomorphic submersion in the direction $v$, the cup-product with $\tau_b(v)$ induces the fiberwise map:
\begin{eqnarray}\label{eq-KS}
\tau_b(v): H^{d}(X_b, \Omega_{X_b}^c)\rightarrow H^{d+1}(X_b, \Omega_{X_b}^{c-1}).
\end{eqnarray}
Then $\theta$ is the natural tensorial map induced by the family of maps $\{\tau_b\in T_{X_b}^{*(1,0)}\otimes (\mathcal{H}_b^{c,d})^*\otimes \otimes \mathcal{H}_b^{c-1,d+1}\}$.  

With the decomposition in \eqref{eq-Ddecomp}, we can define the following operator:
\begin{eqnarray*}
&&D'=\partial'+\bar{\theta}: \cE^{p}(V^q)^{R,S}\rightarrow \cE^{p+1}(V^q)^{R+1,S}, \\
&&D''=\bar{\partial}'+\theta: \cE^{p}(V^q)^{R,S}\rightarrow \cE^{p+1}(V^q)^{R, S+1}. 
\end{eqnarray*}
The above discussion applies to any variation of Hodge structures of any fixed weight. So for the rest of this subsection, we just write $V$ for $V^q$. 
Now assume that $V$ is polarizable: there exists a non-degenerate flat bilinear pairing on $V$ defined over $\bR$ that satisfies the Hodge-Riemann bilinear relations (see \cite[vI.10.1]{Voi02} for a precise definition). Then there is a flat Hodge metric on the smooth vector bundle $\cE^{\cdot}(V)$ which allows us to define the adjoint operators $D^*, D'^*, D''^*$ and the Laplace operators:
\begin{equation*}
\square_{D}=DD^*+D^*D, \quad \square_{D'}=D'D'^*+D'^*D', \quad \square_{D''}=D''D''^*+D''^*D. 
\end{equation*}
Denote by $\mathfrak{h}=\mathfrak{h}^{p}(V):=\mathrm{Ker}(\square_D)$ the space of harmonic form in $\cE^{p}(V)$. Then the standard Hodge theory gives us:
\begin{thm}[Hodge theorem]\label{thm-Hodge}
There is an isomorphism $H^p(B, V)\cong \mathfrak{h}$. 
\end{thm}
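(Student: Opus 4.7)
The plan is to run the classical Hodge-theoretic argument for a flat bundle with non-flat Hermitian metric, combined with Lemma \ref{lem-d1vsD}, which already identifies $H^p(B,V)$ with the cohomology of the complex $(\cE^\bullet(B,V),D)$. The flatness of the Gauss-Manin connection gives $D^2=0$, so $(\cE^\bullet(V),D)$ is indeed a complex; hence
\begin{equation*}
H^p(B,V)\;\cong\;\frac{\mathrm{Ker}(D\colon \cE^p(V)\to \cE^{p+1}(V))}{\mathrm{Im}(D\colon \cE^{p-1}(V)\to \cE^p(V))}.
\end{equation*}
What must be done is to identify this cohomology with $\mathfrak{h}=\mathrm{Ker}(\square_D)$.

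First I would fix a K\"ahler metric on $B$ (available by $\mathbf{(Condition\; II)}$) and use the polarization-induced Hodge metric on the fibers of $V_\bC$ to assemble a global $L^2$-inner product on $\cE^\bullet(V)$. Relative to this inner product, the formal adjoint $D^*$ exists and $\square_D=DD^*+D^*D$ is a second-order formally self-adjoint operator. The symbol of $D+D^*$ is the same as in the scalar de Rham case (it only uses the Riemannian metric on $T^*B$ and the fiber metric on $V$), so $\square_D$ has the usual positive-definite principal symbol and is therefore elliptic. At this point, the compactness of $B$ together with standard elliptic theory yields finite-dimensionality of $\mathfrak{h}$, closedness of the image $\mathrm{Im}(\square_D)$, and the orthogonal decomposition
\begin{equation*}
\cE^p(V)\;=\;\mathfrak{h}^p\;\oplus\;\square_D\cE^p(V)\;=\;\mathfrak{h}^p\;\oplus\;D\cE^{p-1}(V)\;\oplus\;D^*\cE^{p+1}(V),
\end{equation*}
where the splitting of the image into $D$- and $D^*$-parts uses $D^2=0$.

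Next I intersect this decomposition with $\mathrm{Ker}(D)$: a form of type $D^*\beta$ that is also $D$-closed satisfies $\la D^*\beta,D^*\beta\ra=\la\beta,DD^*\beta\ra=0$ (after moving $D$ across, using $DD^*\beta\in \mathrm{Ker}(D^*)$ arguments or simply $\la DD^*\beta,\beta\ra=\|D^*\beta\|^2$ once we know $DD^*\beta\in\mathrm{Ker}(D)^\perp$). Thus $\mathrm{Ker}(D)\cap \cE^p(V)=\mathfrak{h}^p\oplus D\cE^{p-1}(V)$, which passes to the quotient to give the canonical isomorphism $\mathfrak{h}^p\cong \mathrm{Ker}(D)/\mathrm{Im}(D)$. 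Composing with the isomorphism from Lemma \ref{lem-d1vsD} yields $H^p(B,V)\cong \mathfrak{h}$.

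The main obstacle I anticipate is being honest about ellipticity of $\square_D$ in the presence of a non-flat Hodge metric: because $D$ is not the Chern connection of the Hodge metric, one must check that the adjoint $D^*$ is still a genuine first-order differential operator whose symbol pairs with $D$ to give the Riemannian Laplacian symbol on $T^*B\otimes V$. This is standard (the symbol only sees the top-order part of $D$, which agrees with any metric connection), but it is the one step where one must quote or reprove a fact outside the excerpt. Once ellipticity is granted, everything else follows from the general Hodge package applied to the flat complex $(\cE^\bullet(V),D)$, exactly as in Zucker's treatment \cite{Zuc79}.
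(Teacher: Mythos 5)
Your proposal is correct and is essentially the argument the paper relies on: the paper offers no proof of Theorem \ref{thm-Hodge}, simply invoking ``standard Hodge theory'' (following Zucker \cite{Zuc79}), and your write-up is precisely that standard package — ellipticity of $\square_D$ (whose symbol is unaffected by the non-flatness of the Hodge metric), the resulting orthogonal decomposition, and the identification of $\mathrm{Ker}(D)/\mathrm{Im}(D)$ with $\mathfrak{h}$ via $\|D^*\beta\|^2=\langle \beta, D D^*\beta\rangle=0$ for $D$-closed $D^*\beta$. The only cosmetic remark is that a Riemannian metric on $B$ suffices here; the K\"ahler hypothesis is needed only later for Deligne's identities in Theorem \ref{thm-Deligne}.
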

Deligne proved the following results:
\begin{thm}[{Deligne, see \cite[Section 2]{Zuc79}}]\label{thm-Deligne}
\begin{enumerate}
\item The following generalized K\"{a}hler identities are true. 
\begin{equation}\label{eq-KahlerId}
[\Lambda, D]=-C^{-1}D^*C, \quad [\Lambda, D'']=-\sqrt{-1}D'^*, \quad [\Lambda, D']=\sqrt{-1} D''^*
\end{equation}
where $C$ is the operator that is the direct sum of the scalar operator $i^{R-S}$ on $\cE^p(V)^{R,S}$. 
\item
The following equalities for Laplacians hold true:
\begin{equation*}
\square_{D'}=\square_{D''}, \quad \square_{D}=\square_{D'}+\square_{D''}=2 \square_{D''}.
\end{equation*}
As a consequence, a form is harmonic if and only if all of its $(R, S)$-components are harmonic. 
\item Denote by $\mathfrak{h}^{R,S}$ the space of harmonic form in $\cE^{p}(V)^{R,S}$. There is a Hodge structure on $H^p(B, V)$ induced by the isomorphism in Theorem \ref{thm-Hodge} and the decomposition:
\begin{equation*}
\mathfrak{h}=\bigoplus_{R+S=p+q} \mathfrak{h}^{R,S}.
\end{equation*}
\end{enumerate}
\end{thm}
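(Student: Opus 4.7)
The plan is to adapt the classical proof of Hodge theory on a compact Kähler manifold to the VHS setting, with the Kähler metric on $B$ combined with the fiberwise polarization playing the role of the Kähler metric, and to derive (2) and (3) as formal consequences of (1).

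For (1), I would first observe that $D = D' + D''$ where $D' = \partial' + \bar\theta$ and $D'' = \bar\partial' + \theta$, and that with respect to the Hodge metric, $\partial' + \bar\partial'$ is the Chern connection on each smooth Hermitian subbundle $\mathcal{H}^{c,d} \to B$, while $\theta,\bar\theta$ are tensorial operators given by wedging with the Kodaira-Spencer class as in \eqref{eq-KS}. The identities in \eqref{eq-KahlerId} then split into two essentially independent pieces. The part involving only $\partial',\bar\partial'$ reduces to the standard Kähler identities for a Hermitian holomorphic bundle over the Kähler base $(B,\omega_B)$, which is the familiar extension of Nakano's original argument. The part involving $\theta,\bar\theta$ is purely tensorial and can be verified pointwise: using that by the Hodge-Riemann bilinear relations for the polarization the adjoint of the wedge operator $\theta\wedge(-)$ is, up to the sign twist given by $C = i^{R-S}$, the wedge operator $\bar\theta\wedge(-)$, the identities $[\Lambda,\theta] = -\sqrt{-1}\,(\bar\theta)^*$ and $[\Lambda,\bar\theta] = \sqrt{-1}\,\theta^*$ become algebraic statements about Lefschetz-type operators that follow by an $\mathfrak{sl}_2$-type computation in a unitary frame.

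For (2), I would use the identities from (1) -- rewritten as $D'^* = \sqrt{-1}\,[\Lambda,D'']$ and $D''^* = -\sqrt{-1}\,[\Lambda,D']$ -- to compare $\square_{D'}$ and $\square_{D''}$; a standard Jacobi-identity manipulation (exactly as in the proof of the classical equality $\square_\partial = \square_{\bar\partial}$) then gives $\square_{D'} = \square_{D''}$. To obtain $\square_D = \square_{D'}+\square_{D''}$, I verify the mixed anticommutators $D'D''^*+D''^*D' = 0 = D''D'^* + D'^*D''$, which again follow directly from (1). Since $D'$ shifts the $(R,S)$-bidegree by $(1,0)$ and $D''$ by $(0,1)$, $\square_D$ preserves $(R,S)$ and the harmonic space splits accordingly.

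For (3), invoking Theorem \ref{thm-Hodge} identifies $H^p(B,V) \cong \mathfrak{h}^p(V)$; the bidegree decomposition of $\mathfrak{h}^p(V)$ from (2) gives the stated splitting, and the reality of the local system together with the conjugation $\overline{\mathcal{E}^{p}(V)^{R,S}} = \mathcal{E}^p(V)^{S,R}$ yields $\overline{\mathfrak{h}^{R,S}} = \mathfrak{h}^{S,R}$, confirming that this bigrading is a Hodge structure of weight $p+q$. The main obstacle is (1): while the $(\partial',\bar\partial')$-identities are essentially classical, proving the $\theta$-identities requires careful bookkeeping of the sign twist via $C$ and of the way the polarization pairs $\mathcal{H}^{c,d}$ with $\mathcal{H}^{d,c}$. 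I expect the cleanest route is a pointwise computation in a unitary frame adapted to the Hodge decomposition, in which the polarization takes standard form, reducing the whole package to $\mathfrak{sl}_2$-type commutator identities.
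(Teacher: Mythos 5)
The paper does not prove Theorem \ref{thm-Deligne}: it is quoted as a theorem of Deligne, with \cite[Section 2]{Zuc79} cited for the proof, so there is no in-paper argument to compare against. Your sketch is a correct outline of exactly that standard argument---decompose $D'=\partial'+\bar{\theta}$, $D''=\bar{\partial}'+\theta$ into the Chern connection of the Hodge metric on $\bigoplus\mcH^{c,d}$ (for which the classical K\"{a}hler--Nakano identities over the K\"{a}hler base apply) plus the tensorial Higgs-field part (for which the identities are pointwise $\mathfrak{sl}_2$-type computations, using that the flat polarization makes $\theta$ and $\bar{\theta}$ mutually adjoint up to the Weil twist $C$), and then derive (2) and (3) formally as in the classical case.
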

Note that Deligne's theory reduces to the classical Hodge theory when $V=\bR$. 
As in the classical case, the K\"{a}hler identities imply the following useful identity:
\begin{eqnarray}\label{eq-D'D''*}
D'D''^*+D''^*D'=0.
\end{eqnarray}
We will need the following variant of the standard $\partial\bar{\partial}$-lemma and a Lemma by Blanchard (\cite[Lemma II 3.2]{Bla56}).
\begin{lem}\label{lem-D'D''}
\begin{enumerate}
\item
If $x \in \cE^\cdot(V)^{R,S}$ is $D$-exact, then there exists $z \in \cE^{\cdot}(V)^{R-1,S-1}$ such that $x=D'D'' z$.
\item
If $x\in \cE^\cdot(V)^{R+1,S}\oplus \cE^\cdot(V)^{R, S+1}$ is $D$-exact, then there exists $w\in \cE^{\cdot}(V)^{R,S}$ such that $x=D w$. 
\end{enumerate}
\end{lem}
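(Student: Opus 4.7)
The plan for both parts is to adapt the classical $\partial\bar{\partial}$-lemma using Deligne's generalized K\"{a}hler identities from Theorem \ref{thm-Deligne}. The crucial input is that $\square_{D'} = \square_{D''} = \frac{1}{2}\square_{D}$, so the space $\mathfrak{h}$ of harmonic forms is common to all three Laplacians and decomposes by bi-type $(R,S)$ (Theorem \ref{thm-Deligne}(2)-(3)). From $D^{2}=0$ combined with the $(R,S)$-grading, one obtains the anti-commutation identities
\[ \{D', D''\} = 0, \quad \{D', D''^{*}\} = 0, \quad \{D'', D'^{*}\} = 0, \]
which will be used repeatedly to shuffle operators past each other.

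For Part (1), let $x \in \cE^{\cdot}(V)^{R,S}$ be $D$-exact. Since $Dx=0$ has components in bi-type $(R+1,S)\oplus(R,S+1)$, both $D'x=0$ and $D''x=0$. First I would apply the $D'$-Hodge decomposition to $x$: the $D$-exactness forces $H(x)=0$, while $D'x = 0$ kills the $D'^{*}$-exact component by the standard inner-product argument, yielding $x = D'\psi$ for some $\psi \in \cE^{\cdot}(V)^{R-1,S}$. Next, I would apply the $D''$-Hodge decomposition to $\psi = h + D''a + D''^{*}b$ with $a \in \cE^{\cdot}(V)^{R-1,S-1}$. Using $D'h=0$ (since $h$ is $D$-harmonic) and $\{D', D''^{*}\} = 0$, this gives $x = D'D''a - D''^{*}D'b$. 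Setting $e := -D''^{*}D'b = x - D'D''a$, the goal is to show $e=0$. A direct calculation gives $D''e = D''x - D''(D'D''a) = 0$, and hence
\[ \|e\|^{2} = \langle e,\, -D''^{*}D'b\rangle = \langle -D''e,\, D'b\rangle = 0. \]
Thus $z := a$ satisfies $x = D'D''z$.

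For Part (2), write $x = \alpha + \beta$ with $\alpha \in \cE^{\cdot}(V)^{R+1,S}$ and $\beta \in \cE^{\cdot}(V)^{R,S+1}$. Splitting $Dx=0$ by bi-type gives
\[ D'\alpha = 0, \quad D''\beta = 0, \quad D''\alpha + D'\beta = 0. \]
Because harmonic forms decompose by bi-type and $\alpha, \beta$ lie in different summands, the $D$-exactness of $x$ forces $H(\alpha)=0$ and $H(\beta)=0$ separately. The same $D'$-Hodge argument from Part (1) then yields $\alpha = D'u$ for some $u\in\cE^{\cdot}(V)^{R,S}$; symmetrically $\beta = D''v$. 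The compatibility relation rewrites as $D'D''(v-u) = 0$, so $\mu := D'(u-v)\in\cE^{\cdot}(V)^{R+1,S}$ is $D$-closed (check: $D''\mu = -D'D''(u-v) = 0$) and orthogonal to harmonic forms (being $D'$-exact), hence $D$-exact of pure bi-type. Part (1) applied to $\mu$ produces $p$ with $\mu = D'D''p$. Setting $w := v + D''p \in \cE^{\cdot}(V)^{R,S}$, I would verify directly
\[ D'w = D'v + D'D''p = D'v + \mu = D'u = \alpha, \qquad D''w = D''v + D''^{2}p = \beta, \]
so $Dw = x$.

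The main technical obstacle will be the type bookkeeping: three different Hodge decompositions (with respect to $D$, $D'$, and $D''$) enter the argument, and the anti-commutation identities must be tracked carefully to get signs and operator placements right. The conceptual pillar that makes the cancellations close up is Deligne's coincidence of the three harmonic spaces $\mathfrak{h}_{D} = \mathfrak{h}_{D'} = \mathfrak{h}_{D''}$ together with their bi-type decomposition; without this, neither the vanishing of the mixed error term $-D''^{*}D'b$ in Part (1) nor the separate vanishing of $H(\alpha)$ and $H(\beta)$ in Part (2) would be available.
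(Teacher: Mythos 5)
Your proof is correct and follows essentially the same route as the paper: Hodge decompositions with respect to $D'$ and $D''$, Deligne's coincidence of the harmonic spaces for $\square_D$, $\square_{D'}$, $\square_{D''}$, and the anticommutation identity between $D'$ and $D''^*$ to kill the mixed error term (the paper peels off $D''$ first and then $D'$, and in part (2) subtracts $Du$ to reduce to a pure-type form before invoking part (1), but these differences are cosmetic). One small correction: the identities $\{D',D''^*\}=0$ and $\{D'',D'^*\}=0$ do \emph{not} follow from $D^2=0$ and the bigrading alone --- they require the generalized K\"{a}hler identities \eqref{eq-KahlerId}, exactly as recorded in \eqref{eq-D'D''*}.
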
 
We provide the short proof, which is the same as in the classical case, for the reader's convenience. 
\begin{proof}
By standard Hodge theorem, there is a Green operator $G: \cE^{\cdot}(V)\rightarrow \cE^{\cdot}(V)$ that preserves types, commutes with $D'', D'$ and satisfy $G\square_{D}=\square_{D}G=\mathrm{Id}-\mathbb{H}$ where $\mathbb{H}$ is the orthogonal projection to the space of harmonic forms. So we get a decomposition for $x:=[\eta]$:
\begin{equation*}
x=\mathbb{H}x+2(D''D''^*+D''^*D'')G x=2 D'' D''^*Gx. 
\end{equation*}
So we can find a form $y=2 D''^*Gx$ of type $(R, S-1)$ such that $x=D''y$. Applying the similar decomposition for $y$ we get:
$
y=\mathbb{H}y+2(D'D'^*+D'^*D')Gy=D'z+D'^*u
$ (with $z=2D'^*Gy$ and $u=2 D'Gy$) and hence $x=D''D'z+D''D'^*u$. On the other hand, we know that $D'x=0$. So that $0=D'D''D'^*u=-D'D'^*D'' u$ where we used \eqref{eq-D'D''*}. Pairing this with $D''u$, we see that $D''D'^*u=0$. So we conclude $x=D''D'z$.  

For the second statement, set $x=y+z$ with $y\in \cE(V)^{R+1,S}$ and $z\in \cE(V)^{R,S+1}$. Because $\mathbb{H}x=0$, we get $\mathbb{H}y=\mathbb{H}z=0$. Because we also have $D'y=0$, there is a decomposition $y=D'D'^*Gy=D'u$ where $u=D'^*Gy$ is of type $(R,S)$. Then $x-Du=(x-D'u)-D''u=z-D''u$ is $D$-exact and is of type $(R, S+1)$. By the first statement, we get $x-Du=D''D'v=DD'v$ for $v\in \cE^{\cdot}(V)^{R-1,S}$. So the form $u+D'v$ is of type $(R,S)$ and satisfies the conclusion.

\end{proof}

\section{Proof of main results}\label{sec-main}






In this section, we prove Theorem \ref{thm-main2}, which also implies Theorem \ref{thm-main1}. The ``only if" direction is clear by the discussion in the introduction and section \ref{sec-Leray}. So we only need to prove the ``if" direction. 

In the following construction, we fix an open covering $\{U_k\}$ of $B$ with each $U_k$ biholomorphic to a polydisc and also a partition of unity $\{\rho_k\}$ subordinate to the covering $\{U_k\}$. 

By assumption $\mathbf{(Condition\; Ia)}$, there is an element $\sigma\in H^0(B, V^2)$ such that $\sigma|_{X_b}$ is a K\"{a}hler class for any $b\in B$.  
By refining the open covering $\{U_k\}$, we can assume that $\pi^{-1}(U_k)$ admits a K\"{a}hler metric $\omega_k$ and $[\omega_k]|_{X_b}=\sigma|_{X_b}$. Set $\omega=\sum_k \rho_k \omega_k$. 
Then $\omega$ is a globally defined $(1,1)$-form satisfying $\sigma=[\omega]\in H^0(B, V^2)=E^{0,2}_2$. 

To do concrete calculation,  we choose local holomorphic coordinate $\{w_\alpha, z_k\}=\{w^{(k)}_\alpha, z^{(k)}_r\}$ on $\pi^{-1}(U_k)$ such that $\pi$ is given by the map $\pi(w_\alpha, z_r)=(w_\alpha)$. Then $\omega$ can be expanded locally as:
\begin{equation*}\label{eq-omglocal}
\omega=a_{r\bar{s}}dz_r\wedge d\bar{z}_s+a_{\alpha \bar{r}}dw_\alpha \wedge d\bar{z}_r+a_{\bar{\beta}s}d \bar{w}_\beta \wedge dz_s+a_{\alpha\bar{\beta}} dw_\alpha\wedge d\bar{w}_\beta.  
\end{equation*} 
With the assumption that $d_2[\omega]=0$, our goal is to correct $\omega$ by a $(1,1)$-form $\chi$ such that $Q:=\omega-\chi$ 
satisfies $d Q=0$ globally on $X$. 

We will achieve this in several steps by achieving $dQ\in \cF_L^p(\cE^3(X))$ for $p=1,2,3,4$ sub-sequentially (note that $\cF_L^4(\cE^3(X))=0$). 
For $p=1$ this is already true because $\omega|_{X_b}$ is closed. For $p=2$, we will see that the correction comes from the fact that
$d_1[\omega]=0$ so that $[\omega]\in E^{0,2}_2=H^0(B, V^2)$. For $p=3$, we need to use the assumption of $\mathbf{(Condition\; Ib)}$ that $d_2[\omega]=0$ and crucially the Lemma \ref{lem-D'D''}.  Finally one can complete the $p=4$ case with the usual $\partial\bar{\partial}$-lemma.

For simplicity of notation, we will denote by $(dQ)_{p;3-p}\in \cF_L^p \cE^3(X)/\cF_L^{p+1}\cE^3(X)$ the graded piece of $dQ\in \cF_L^p \cE^3(X)$. In other words, if $dQ\in \cF_L^p \cE^3(X)$ then $(dQ)_{p;3-p}=0$ if and only if $dQ\in \cF_L^{p+1}\cE^3(X)$. Note that this is the case if and only if 
for any $b\in B$, $f_b^*\iota_{\tilde{v}_{\alpha_1}}\cdots \iota_{\tilde{v}_{\alpha_p}}dQ=0$ where $\tilde{v}_{\alpha_i}\in \{v_1, \dots, v_m, \bar{v}_1,\dots, \bar{v}_m\}$ where $m=\dim B$, $v_{\alpha}$ (resp. $\bar{v}_\alpha$) are lifts of local coordinate vector fields $\partial_{w_\alpha}$ (resp. $\partial_{\bar{w}_\alpha}$) near $b\in B$ and $f_b: X_b\rightarrow X$ is the embedding map of fiber. 
 
\begin{itemize}
\item \textbf{ (Step 1: Elimination of $(d\omega)_{0;3}$)} This is already true for the $(1,1)$-form $\omega$ because of the vanishing (recall that $f_b: X_b\rightarrow X$ is the embedding of the fiber)
\begin{equation*}
f_b^* d \omega=d f_b^*\omega=0.
\end{equation*}

\item \textbf{ (Step 2: Elimination of $(d \omega)_{1;2}$)} We can calculate:
\begin{eqnarray}\label{eq-1v}
f_b^* \iota_{v_\alpha} d{\omega}&=&f_b^* (\mathcal{L}_{v_\alpha} \omega-d \iota_{v_\alpha}\omega)=f_b^* \mathcal{L}_{v_\alpha} \omega -d f_b^*\iota_{v_\alpha}\omega.
\end{eqnarray}
Because $d_1[\omega]=[d\omega]_2=0$, $f_b^*\mathcal{L}_{v_\alpha}\omega$ is an exact form for any $b\in B$. So the right-hand-side of \eqref{eq-1v} is an exact form of type $(1,1)+(0,2)$ for each $b\in B$. By Lemma \ref{lem-D'D''}.2, there 
there exists a form $\bar{h}_{\alpha}$ of fiber type $(0,1)$ such that $d\bar{h}_\alpha=f_b^* \iota_{v_\alpha} d{\omega}$.
Moreover we can assume that over any $U_k$, $h_\alpha=h^{(k)}_\alpha$ depends smoothly on $b\in U_k$. Set $\chi^{(k)}=dw_\alpha\wedge \bar{h}_{\alpha}+d\bar{w}_\alpha\wedge h_{\alpha}$ over $\pi^{-1}(U_k)$ and define: 
\begin{equation*}
Q=\omega-\sum_k \rho_k \chi^{(k)}.
\end{equation*}
Then $Q$ is globally defined and satisfies $dQ\in \cF_L^2\cE^3(X)$. From now on, replace $\omega$ by $Q$ (i.e. we call $Q$ our new $\omega$). 

\item\textbf{(Step 3: Elimination of the components of $(d \omega)_{2,0;1,0}$)}  
 Since $d\omega\in \cF_L^2\cE^3(X)$, it defines a smooth section $[d\omega]$ of $\cE^2(V^1)$ which satisfies $D[d\omega]=0$ and 
$d_2 [\omega]=[d\omega]\in E^{2,1}_2=H^2(B, V^1)$. Because the complex structure is integrable, $d\omega$ is a form of type $(2,1)+(1,2)$ and $[d\omega]\in \cE^2(V^1)$ is contained in 
\begin{eqnarray*}
\cE^{2,0}(\mcH^{0,1})\oplus \cE^{1,1}(\mcH^{1,0})\oplus \cE^{0,2}(\mcH^{1,0})\oplus \cE^{1,1}(\mcH^{0,1}).
\end{eqnarray*}
Now we use the assumption that $ d_2  [\omega]=0$ so that $[d\omega]_2\in \cE^2(V^1)=\cE^{\cdot}(V)^{2,1}\oplus \cE^{\cdot}(V)^{1,2}$ is $D$-exact. 
By Lemma \ref{lem-D'D''}.2, there exists 
$[\chi]\in \cE^{\cdot}(V)^{1,1}$ such that $[d\omega]_2=D[\chi]=[d\chi]_2$.  
Because $[\chi]$ belongs to $\cE^{\cdot}(V)^{1,1}=\cE^{1,0}(\mcH^{0,1})\oplus \cE^{0,1}(\mcH^{1,0})$, 
we can construct a representative $\chi$ as a $(1,1)$-form (by using partition of unity as in the proof of Lemma \ref{lem-d1vsD}). 

The terms of $d\omega$ with type $(0,2;1,0)$ or $(2,0; 0,1)$ are given by: 
\begin{eqnarray*}
&&\frac{1}{2}((\partial_{\bar{\alpha}}a_{\bar{\beta}r}-\partial_{\bar{\beta}}a_{\bar{\alpha}r})dz_r)d \bar{w}_\alpha\wedge d \bar{w}_\beta+\frac{1}{2}((\partial_{w_\alpha} a_{\beta \bar{s}}-\partial_{w_\alpha} a_{\beta \bar{s}})d\bar{z}_s)\wedge dw_\alpha\wedge dw_\beta\\
&=& \frac{1}{2} P_{\bar{\alpha}\bar{\beta}} d\bar{w}_\alpha\wedge  d\bar{w}_{\beta}+\frac{1}{2}\overline{P_{\bar{\alpha}\bar{\beta}}}\wedge dw_\alpha\wedge dw_\beta. 
\end{eqnarray*} 
By replacing $\omega$ by $\omega-\chi$, we can assume that $[d\omega]_2=0\in \cE^{2}(B, V^1)$. 
This implies that over each fiber $X_b$, the (1,0)-form $P_{\bar{\alpha}\bar{\beta}}$ is an exact form. Because $X_b$ is K\"{a}hler, this implies that $P_{\bar{\alpha}\bar{\beta}}=0$. So $d\omega$ does not have terms of type $(0,2;1,0)$ or $(2,0;0,1)$. 

\item \textbf{(Step 4: Elimination of components of $(d \omega)_{1,1;1,0}$)}

 The terms of $d\omega$ with type $(1,1;1,0)$ or $(1,1; 0,1)$ can be written as $P_{\alpha\bar{\beta}}\wedge dw_\alpha\wedge d\bar{w}_\beta$ where 
\begin{eqnarray*}
P_{\alpha\bar{\beta}}&=&(\partial_{\bar{w}_\beta}a_{\alpha \bar{s}}) d\bar{z}_s-(\partial_{w_\alpha}a_{\bar{\beta}r})d{z}_r+(\partial_{\bar{z}_s}a_{\alpha\bar{\beta}}) d\bar{z}_s+(\partial_{z_r}a_{\alpha\bar{\beta}}) dz_r.
\end{eqnarray*}
As in Step 3, we can assume that $[d\omega]_2=0\in \cE^{2}(B, V^1)$, which again means that $P_{\alpha\bar{\beta}}$ in the above expression is exact. 
So for any $b\in U_k$, there exists a smooth function $\xi^{(k)}_{\alpha\bar{\beta}}$ on $X_b$ such that $P_{\alpha\bar{\beta}}=d \xi^{(k)}_{\alpha\bar{\beta}}$ and $\int_{X_b}\xi^{(k)}_{\alpha\bar{\beta}}\omega^n=0$. Moreover, we can assume that $\xi^{(k)}_{\alpha\bar{\beta}}$ depends smoothly on $b\in U_k$. Define a new $(1,1)$-form
\begin{equation*}
Q=\omega-\sum_k \rho_k\cdot \xi^{(k)}_{\alpha\bar{\beta}} dw^{(k)}_\alpha\wedge d\bar{w}^{(k)}_\beta. 
\end{equation*}
Then $Q$ satisfies $dQ\in \cF_L^3\cE^3(X)$.  
Now we replace $\omega$ by $Q$. 

\item (\textbf{Step 5: Completion of construction})
Because $d \omega\in \cF_L^3\cE^3(X)$, we can get:
\begin{equation*}
d\omega= f_{\alpha\beta\bar{\gamma}} dw_\alpha\wedge d {w}_\beta\wedge d\bar{w}_\gamma+ \overline{f_{\alpha\beta\bar{\gamma}}} d\bar{w}_\alpha\wedge d \bar{w}_\beta\wedge d {w}_\gamma.
\end{equation*} 

$dd\omega=0$ implies that
$f_{\alpha\beta\bar{\gamma}}$ are constant along fibers. So $d\omega=\pi^*\eta_B$ for some $\eta_B\in \cE^{2,1}(B)+\cE^{1,2}(B)$. Clearly $\eta_B$ is closed. $\eta_B$ must be exact because for any closed $(m-2)$-form $T$ on $B$
\begin{equation*}
\int_B \eta_B\wedge T=\int_X \omega^n\wedge \pi^*\eta_B\wedge \pi^*T=\frac{1}{n+1}\int_X d \omega^{n+1}\wedge \pi^*T=0.
\end{equation*}
Because $B$ is assumed to be K\"{a}hler $\mathbf{(Condition\; II)}$, Lemma \ref{lem-D'D''} applied to the constant local system $\bR$, we know that there is a smooth $(1,1)$-form such that $\eta_B=d\chi_B$. So the $(1,1)$-form $Q'=\omega-\pi^*\chi_B$ is a closed $(1,1)$-form that restricts to be K\"{a}hler forms on each fiber. Let $\omega_B$ be a K\"{a}hler form on $B$, then for $K\gg 1$, the closed $(1,1)$-form $Q:=Q'+K\cdot \pi^*\omega_B$ becomes a K\"{a}hler form on $X$.

\end{itemize}

\begin{rem}\label{rem-Blacons}
The above proof generalizes and simplifies the proof of \cite[TH\'{E}OR\'{E}M PRINCIPAL II]{Bla56} where Blanchard considered the special case when the holomorphic submersion $\pi: X\rightarrow B$ is isotrivial and and also satisfies an extra condition that $\pi_1(B)$ acts trivially on $H^1(F)$. 
In fact, Blanchard showed in \cite[TH\'{E}R\'{E}ME II.1.I]{Bla56} that under this triviality assumption and $\mathbf{(Condition\; Ia)}$, the $\mathbf{(Condition\; Ib)}$ is equivalent to the condition that the transgression map $H^1(F)\rightarrow H^2(B)$ is 0. Then he used this vanishing of transgression map to carry out the construction of K\"{a}hler metrics in his original steps corresponding to Step 3 and Step 4 above. Moreover because the analytic fiber bundle is isotrivial, the standard Hodge theory for a K\"{a}hler manifold is sufficient for \cite{Bla56} instead of Deligne's Hodge theory for polarized variation of Hodge structures used in the above proof. 

\end{rem}

\begin{rem}\label{rem-11case}
If we know that the class $[Q]$ contains a closed $(1,1)$-form $Q$, then the above construction can be greatly simplified as follows. We start with the $(1,1)$-form $\omega$ constructed by using partition of unity as at the beginning of this section. For any $b\in B$, we can use the standard $\partial\bar{\partial}$-lemma to conclude that there exists $\psi_b\in C^\infty(X_b, \bR)$ such that $\omega_b=Q|_{X_b}+\sqrt{-1}\partial\bar{\partial}\psi_b$ and $\int_{X_b} \psi_b \omega_b^n=0$. Moreover, it is easy to see that $\psi_b$ depends smoothly on $b\in B$. This allows us to define a smooth function $\Psi$ on $X$ such that $\Psi|_{X_b}=\psi_b$. Define $Q=\chi+\sqrt{-1}\partial\bar{\partial} \Psi$. Then $Q$ is a closed $(1,1)$-form that restricts to be a K\"{a}hler metric $\omega_b$ on $X_b$. Choosing a K\"{a}hler metric $\omega_B$ on $B$,  $Q+K \cdot \pi^*\omega_B$ is then a K\"{a}hler metric for $K\gg 1$. 

If we just know that $Q$ is a closed 2-form not necessarily of $(1,1)$-type, then a similar construction would only produces a Hermitian-Symplectic form on $X$. The Hermitian-Symplectic structure will be discussed in the next section. 
\end{rem}

\section{An application to Hermitian-Symplectic structures}\label{sec-HS}

Let $X$ be a complex manifold and $Q$ be a closed 2-form on $X$. 
$Q$ is a Hermitian-Symplectic (HS) structure precisely when $Q$ satisfies the following two conditions. 
\begin{enumerate}
\item $Q$ is a symplectic form. In other words, $Q^{\dim X}$ is non-vanishing and $dQ=0$.  
\item If $Q=Q^{2,0}+Q^{1,1}+Q^{0,2}$ is the decomposition of $Q$ into differential forms of type $(2,0)$, $(1,1)$ and $(0,2)$ respectively, then $Q^{1,1}$ is a positive definite $(1,1)$-form. 
\end{enumerate}
We are going to apply Theorem \ref{thm-main2} to prove the following general result.
\begin{prop}\label{prop-HSKahler}
Let $\pi: X\rightarrow B$ be a holomorphic submersion with K\"{a}hler fibers and a K\"{a}hler base. Assume that $X$ admits a Hermitian-Symplectic structure. 
Then the following conditions are equivalent:

\begin{enumerate}
\item $X$ is K\"{a}hler.
\item There exists $[\omega]\in H^0(B, R^2\pi_*\bR)$ that restricts to a K\"{a}hler class on each fiber $F$. 
\item The variation of Hodge structure $R^2\pi_*\bR$ is polarizable. 
\item $X$ satisfies the $\partial\bar{\partial}$-lemma. 
\end{enumerate}
\end{prop}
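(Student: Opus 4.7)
The plan is to establish $(1) \Leftrightarrow (2)$ as the central equivalence via Theorem \ref{thm-main2} and Proposition \ref{prop-HSd2}, then add (3) and (4) by structural arguments on the VHS $R^2\pi_*\bR$ and by invoking known results on $\partial\bar{\partial}$-manifolds admitting Hermitian-Symplectic structures.

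Starting from (1): a K\"{a}hler form $\omega_X$ on $X$ restricts to a monodromy-invariant family of K\"{a}hler classes on the fibers, which defines a flat section of $R^2\pi_*\bR$ via \eqref{eq-invsp} and so gives (2). This flat K\"{a}hler class in turn polarizes the weight-two VHS $R^2\pi_*\bR$ via the Lefschetz decomposition and the primitive pairing $(\alpha,\beta) \mapsto \int_{X_b} \alpha \wedge \beta \wedge f_b^*[\omega_X]^{n-2}$, giving (3). And $(1) \Rightarrow (4)$ is classical K\"{a}hler Hodge theory.

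The central step $(2) \Rightarrow (1)$ is a direct application of Theorem \ref{thm-main2}: Condition Ia is precisely hypothesis (2), Condition II is the assumed K\"{a}hlerness of $B$, and Condition Ib ($d_2[\omega] = 0$) is supplied by Proposition \ref{prop-HSd2} using the Hermitian-Symplectic hypothesis on $X$. For $(3) \Rightarrow (2)$, the plan is to activate the Deligne-Hodge theory of Section \ref{sec-Deligne} on the now-polarized VHS $R^2\pi_*\bR$: by Deligne's theorem of the fixed part, $H^0(B, R^2\pi_*\bR)$ is a sub-Hodge structure of $H^2(X_b, \bR)$ with a constant Hodge decomposition, so the flat section $[f_b^*Q]$ produced by the HS form has a flat $(1,1)$-component $[f_b^*Q]^{1,1}$ that is real of pure type $(1,1)$ on each fiber. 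One then shows that $[f_b^*Q]^{1,1}$ is a K\"{a}hler class on each fiber $X_b$ by combining the positivity $Q^{1,1}|_{X_b} > 0$ (which yields $[f_b^*Q]^{1,1} \cdot [\omega_b]^{n-1} > 0$ against a reference K\"{a}hler form $\omega_b$ on $X_b$, by bidegree considerations) with the Hodge-Riemann positivity of the polarization restricted to the $(1,1)$-piece of the fixed part, together with the fiberwise $\partial\bar{\partial}$-lemma to produce a closed real $(1,1)$-representative.

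For $(4) \Rightarrow (1)$, the plan is to invoke the known result that a compact complex manifold admitting a Hermitian-Symplectic structure and satisfying the $\partial\bar{\partial}$-lemma is K\"{a}hler, by arguments in the spirit of \cite{Chi14, Pet86} originally formulated for Fujiki class $\mathcal{C}$ but extending to the $\partial\bar{\partial}$-class: the $\partial\bar{\partial}$-lemma lets one write $Q^{2,0} = \sigma + \partial\beta$ for a holomorphic $2$-form $\sigma$ and a $(1,0)$-form $\beta$, reducing $Q$ modulo an exact form to the sum of a $d$-closed holomorphic piece and a closed real $(1,1)$-form, whose cohomology class is then promoted into the K\"{a}hler cone by a Monge-Amp\`{e}re / duality argument. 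The main obstacle I anticipate is the fiberwise K\"{a}hlerness step in $(3) \Rightarrow (2)$: while flatness and $(1,1)$-type of $[f_b^*Q]^{1,1}$ are formal consequences of Deligne's theorem of the fixed part, verifying that this class actually lies in the K\"{a}hler cone requires delicately combining the positivity of $Q^{1,1}$ with the Hodge-Riemann positivity of the polarization on the fixed part, and is the least routine part of the argument.
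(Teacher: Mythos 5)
Your overall skeleton matches the paper's: $(2)\Leftrightarrow(1)$ via Theorem \ref{thm-main2} plus Proposition \ref{prop-HSd2}, $(1)\Rightarrow(3),(4)$ by standard K\"{a}hler Hodge theory, and $(3)\Rightarrow(2)$ by using the polarization to make Deligne's $(R,S)$-decomposition flat, so that the $(1,1)$-component of the flat section $[Q]$ is again a flat section of $R^2\pi_*\bR$. However, two of your steps have genuine gaps.

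First, in $(3)\Rightarrow(2)$ you propose to prove that $[f_b^*Q]^{1,1}$ is a K\"{a}hler class from the positivity $[f_b^*Q]^{1,1}\cdot[\omega_b]^{n-1}>0$ combined with Hodge--Riemann positivity of the polarization. This is not sufficient: positive top intersection against a power of a K\"{a}hler class does not place a real $(1,1)$-class in the K\"{a}hler cone (already on a surface a class can pair positively with an ample class yet be negative on a curve), and Hodge--Riemann positivity of the pairing does not repair this. What is actually needed is Proposition \ref{prop-LZ} (Li--Zhang): the $(1,1)$-component of the class of a Hermitian-Symplectic form on a compact K\"{a}hler manifold is K\"{a}hler, whose proof runs through the Demailly--Paun numerical characterization of the K\"{a}hler cone, i.e.\ positivity against \emph{all} irreducible analytic subsets of all dimensions, not just the fundamental class. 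The paper simply invokes that proposition at this point; your replacement argument, as sketched, would fail.

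Second, your route for $(4)\Rightarrow(1)$ rests on the assertion that a compact complex manifold with a Hermitian-Symplectic structure satisfying the $\partial\bar{\partial}$-lemma is K\"{a}hler, ``extending'' \cite{Chi14, Pet86}. Those results concern Moishezon manifolds and Fujiki class $\mathcal{C}$; the extension to arbitrary $\partial\bar{\partial}$-manifolds is not available (the mass-concentration/Monge--Amp\`{e}re machinery there requires class $\mathcal{C}$), and establishing it would essentially settle a case of the open Streets--Tian problem, so it cannot be assumed. Moreover, your reduction of $Q$ modulo an exact form to ``closed holomorphic piece plus closed real $(1,1)$-form'' destroys the positivity of the $(1,1)$-part, which is exactly the obstruction the unproved ``Monge--Amp\`{e}re / duality argument'' would have to overcome. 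The paper's argument is different and exploits the fibration: by \cite{DGMS} the $\partial\bar{\partial}$-lemma gives a Hodge decomposition of $H^2(X,\bC)$ for which $f_b^*$ is a morphism of Hodge structures; hence $[Q]^{1,1}$ is represented by a closed real $(1,1)$-form on $X$ whose restriction to each fiber is $[Q|_{X_b}]^{1,1}$, a K\"{a}hler class by Proposition \ref{prop-LZ}. This verifies $\mathbf{(Condition\; I)}$ with a class already known to be of type $(1,1)$, so Remark \ref{rem-11case} (the fiberwise $\partial\bar{\partial}$-lemma plus adding $K\pi^*\omega_B$) produces a K\"{a}hler form without any global positivity argument on $X$.
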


The rest of this section is devoted to proving Proposition \ref{prop-HSKahler} and deriving Theorem \ref{thm-HSfiber} as its corollary. 
 Let $F$ be any fiber of $\pi$. It is easy to see that $\chi=Q|_F$ is a Hermitian-Symplectic form on $F$. 
Because $F$ is assumed to be K\"{a}hler, we have a Hodge decomposition:
\begin{equation*}
H^2(F,\bC)=H^{2,0}(F)\oplus H^{1,1}(F)\oplus H^{0,2}(F), \quad H^{0,2}(F)=\overline{H^{2,0}(F)}.
\end{equation*}
The cohomology class $[\chi]\in H^2(F, \bC)$ decomposes as $[\chi]=[\chi]^{0,2}+[\chi]^{1,1}+[\chi]^{0,2}$ with respect to the above decomposition. On the contrast, if we denote by $\chi^{1,1}$ the $(1,1)$-component of the 2-form $\chi$, then $\chi^{1,1}$ is in general only $\partial\bar{\partial}$-closed but not $d$-closed. 

\begin{prop}[\cite{LZ09}]\label{prop-LZ}
 If $\chi$ is a Hermitian-Symplectic form on a K\"{a}hler manifold $F$, then 
$[\chi]^{1,1}$ is a K\"{a}hler class. 
\end{prop}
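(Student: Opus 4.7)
The plan is to exhibit a $d$-closed real $(1,1)$-form $\gamma$ representing $[\chi]^{1,1}\in H^{1,1}(F,\bR)$ and then to verify that its class is K\"ahler by a duality argument based on the pointwise positivity of $\chi^{1,1}$.

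For the first step, $d\chi=0$ forces $\partial\chi^{2,0}=0$ from its $(3,0)$-type component. Let $\beta\in H^0(F,\Omega^2_F)$ be the holomorphic $(2,0)$-form representing the Hodge projection $[\chi]^{2,0}$; the $\partial$-Hodge decomposition on the K\"ahler manifold $F$ yields $\chi^{2,0}=\beta+\partial\sigma$ for some $(1,0)$-form $\sigma$, and reality of $\chi$ gives $\chi^{0,2}=\bar\beta+\bar\partial\bar\sigma$. Set $\chi':=\chi-d(\sigma+\bar\sigma)$, so that $(\chi')^{2,0}=\beta$, $(\chi')^{0,2}=\bar\beta$, and $\gamma:=(\chi')^{1,1}=\chi^{1,1}-(\partial\bar\sigma+\bar\partial\sigma)$ is a real $(1,1)$-form. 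It is $d$-closed because $d\chi'=d\beta=d\bar\beta=0$, and its de Rham class equals $[\chi]-[\beta]-[\bar\beta]=[\chi]^{1,1}$.

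For the second step, pair $[\gamma]$ with an arbitrary non-zero $d$-closed positive $(n-1,n-1)$-current $T$ on $F$. Bidegree considerations force $(\beta+\bar\beta)\wedge T=0$ (a $(2,0)$- or $(0,2)$-form wedged with a current of bidegree $(n-1,n-1)$ lands in a bidegree that vanishes on a complex $n$-manifold), while $dT=0$ kills the exact correction $d(\sigma+\bar\sigma)\wedge T$ by Stokes. Hence
\begin{equation*}
[\chi]^{1,1}\cdot[T]=\int_F\gamma\wedge T=\int_F\chi\wedge T=\int_F\chi^{1,1}\wedge T>0,
\end{equation*}
the last strict inequality by the strict positivity of the $(1,1)$-form $\chi^{1,1}$ paired against the non-zero positive current $T$.

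The main obstacle is then to upgrade this strict positivity against all non-zero closed positive $(n-1,n-1)$-currents into the K\"ahler property of $[\chi]^{1,1}$. In complex dimension two, which is the setting of \cite{LZ09}, this is exactly Lamari's duality theorem. In arbitrary dimension I would invoke the Demailly--Paun numerical criterion for the K\"ahler cone on a compact K\"ahler manifold and induct on $\dim F$: the restriction $\chi|_Y$ to any smooth analytic subvariety $Y\subset F$ is again Hermitian-Symplectic with $[\chi]^{1,1}|_Y=[\chi|_Y]^{1,1}$, so the inductive hypothesis gives $\int_Y([\chi]^{1,1})^{\dim Y}>0$, and the top case $Y=F$ is handled by applying the pairing above to $T$ a suitable current approximation built from $\gamma^{n-1}$ after first perturbing to $[\chi]^{1,1}+\epsilon[\omega_F]$ with $\epsilon\downarrow 0$.
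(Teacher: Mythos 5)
Your first two steps are sound and in fact more constructive than the paper's argument: producing the closed real $(1,1)$-form $\gamma$ representing $[\chi]^{1,1}$, and showing $\int_F\gamma\wedge T=\int_F\chi^{1,1}\wedge T>0$ for every non-zero closed positive $(n-1,n-1)$-current $T$, are both correct. The gap is in the final upgrade to ``$[\chi]^{1,1}$ is K\"{a}hler''. First, the Demailly--Paun numerical criterion requires strict positivity of $\int_Y\bigl([\chi]^{1,1}\bigr)^{\dim Y}$ over \emph{all} irreducible analytic subsets $Y$, including singular ones, so an induction over smooth subvarieties does not suffice (and after resolving $Y$ the pullback of $\chi$ is no longer Hermitian-Symplectic, since its $(1,1)$-part degenerates along the exceptional locus). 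Second, even granting those inequalities, the main theorem of \cite{DP04} only asserts that the K\"{a}hler cone is a \emph{connected component} of the cone of classes with positive intersections; to conclude that $[\chi]^{1,1}$ lies in that component you need an additional input (the standard corollary assumes nefness as an extra hypothesis), which your sketch does not supply. Third, the proposed ``current approximation built from $\gamma^{n-1}$'' is not defined: $\gamma$ is not a positive form, so $\gamma^{n-1}$ is not a positive current.

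The paper avoids all of this with a softer two-line argument that you could graft directly onto your type considerations. By \cite[Theorem 4.3]{DP04}, nefness of $[\chi]^{1,1}$ only requires $\int_Y[\chi]^{1,1}\wedge\omega_F^{p-1}\ge 0$ for every irreducible $Y$ of dimension $p$ and every K\"{a}hler class $[\omega_F]$; these pairings equal $\int_Y\chi^{1,1}\wedge\omega_F^{p-1}\ge 0$ for exactly the bidegree reasons you use (no explicit representative $\gamma$ is even needed at this stage). Then, instead of chasing strict inequalities on subvarieties, one observes that $[\chi]+\epsilon\alpha$ still contains a Hermitian-Symplectic form for every $\alpha\in H^{1,1}(F,\bR)$ and $\epsilon$ small, so $[\chi]^{1,1}+\epsilon\alpha$ is nef for all such perturbations; hence $[\chi]^{1,1}$ lies in the interior of the nef cone, which is precisely the K\"{a}hler cone. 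I recommend replacing your third step with this perturbation argument.
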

As pointed out in \cite{LZ09}, this is a consequence of deep results from \cite{DP04}. 
For the reader's convenience, we provide a quick proof. 
\begin{proof}
First one shows that $[\chi]^{1,1}$ is in the nef cone which is the closure of the K\"{a}hler cone. By \cite[Theorem 4.3]{DP04}, this is equivalent to the condition that for every irreducible analytic subset $Y\subseteq F$, $\dim=p$, and every K\"{a}hler class $[\omega_F]\in H^{1,1}(F,\bR)$, 
\begin{equation*}
\int_Y [\chi]^{1,1}\wedge \omega_F^{p-1}\ge 0.
\end{equation*}
But for reason of the types, the above integral is equal to 
\begin{eqnarray*}
&&\int_Y [\chi] \wedge \omega_F^{p-1}=\int_Y \chi\wedge \omega_F^{p-1}=\int_Y \chi^{1,1}\wedge \omega_F^{p-1}\ge 0.
\end{eqnarray*} 
For any $\alpha\in H^{1,1}(F, \bR)$ and $\epsilon$ sufficiently small, the class $[\chi]+\epsilon \alpha$ clearly also contains a Hermitian-Symplectic form and hence $[\chi]+\epsilon \alpha$ is also contained in the nef cone. So we conclude that $[\chi]^{1,1}$ is in the interior of the nef cone which is nothing but the K\"{a}hler cone. 

\end{proof}


To proceed, we consider the following definition.
\begin{defn}[{see \cite[Definition 4.96]{FOT08}}]
A symplectic manifold $(F, \chi)$ is of Lefschetz type if the multiplication by $[\chi]^{n-1}$ 
\begin{equation}\label{eq-Lefop}
L:=L_{[\chi]}: H^{1}(F, \bR)\rightarrow H^{2n-1}(F, \bR). 
\end{equation}
is an isomorphism.
\end{defn}

\begin{cor}
Let $F$ be a K\"{a}hler manifold. 
If $\chi$ is a Hermitian-Symplectic form on $F$, then $(F, \chi)$ is of Lefschetz type. 
\end{cor}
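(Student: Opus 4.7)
Since $H^1(F,\bR)$ and $H^{2n-1}(F,\bR)$ have the same dimension by Poincar\'{e} duality, it suffices to show that the induced skew-symmetric pairing
\[ Q(\alpha, \beta) = \int_F [\chi]^{n-1} \cup \alpha \cup \beta \]
on $H^1(F, \bR)$ is non-degenerate; non-degeneracy is equivalent to $L_{[\chi]}(\alpha) \neq 0$ for every nonzero $\alpha \in H^1(F, \bR)$. Proposition \ref{prop-LZ} gives a K\"{a}hler form $\omega \in [\chi]^{1,1}$, so Hodge theory equips $H^1(F, \bR)$ with a complex structure $J$ arising from the decomposition $H^1(F, \bC) = H^{1,0}(F) \oplus H^{0,1}(F)$, and any real class decomposes as $\alpha = \alpha' + \overline{\alpha'}$ with $\alpha'$ a harmonic $(1, 0)$-form.

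The plan is to evaluate $Q(\alpha, J\alpha)$ and show it has definite sign. A direct computation gives $\alpha \wedge J\alpha = -2i\, \alpha' \wedge \overline{\alpha'}$, so
\[ Q(\alpha, J\alpha) = -2 \int_F \chi^{n-1} \wedge (i\, \alpha' \wedge \overline{\alpha'}). \]
Since $i\, \alpha' \wedge \overline{\alpha'}$ has bidegree $(1,1)$, only the $(n-1, n-1)$-component of $\chi^{n-1}$ contributes. Expanding the multinomial in $\chi = \chi^{2,0} + \chi^{1,1} + \chi^{0,2}$, this component equals
\[ \sum_{a=0}^{\lfloor (n-1)/2 \rfloor} \binom{n-1}{a,\, n-1-2a,\, a}\, (\chi^{2,0})^a \wedge (\chi^{1,1})^{n-1-2a} \wedge (\chi^{0,2})^a. \]

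For the $a$-th summand I set $\eta_a := (\chi^{2,0})^a \wedge \alpha'$, a $(2a+1, 0)$-form. Reordering the even-degree factors freely yields the identity
\[ (\chi^{2,0})^a \wedge (\chi^{0,2})^a \wedge (i\, \alpha' \wedge \overline{\alpha'}) = i\, \eta_a \wedge \overline{\eta_a}, \]
and since $(2a+1)^2 \equiv 1 \pmod{4}$, the form $i\, \eta_a \wedge \overline{\eta_a}$ is weakly positive by the standard fact that $i^{p^2} \eta \wedge \overline{\eta}$ is weakly positive for any $(p,0)$-form $\eta$. Multiplying by the strongly positive form $(\chi^{1,1})^{n-1-2a}$ then produces a non-negative top form via the strong-vs-weak positivity pairing, so the integral of each summand is non-negative. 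The $a = 0$ term reduces to $\int_F (\chi^{1,1})^{n-1} \wedge (i\, \alpha' \wedge \overline{\alpha'})$, which by a pointwise computation in a $\chi^{1,1}$-orthonormal frame equals $\tfrac{1}{n} \int_F |\alpha'|^2_{\chi^{1,1}}\, (\chi^{1,1})^n$ and is strictly positive whenever $\alpha' \neq 0$.

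The main technical point will be the positivity bookkeeping for the mixing terms $(\chi^{2,0})^a \wedge (\chi^{0,2})^a$ with $a \geq 1$: by themselves these are only real $(2a, 2a)$-forms with no definite sign, and the trick is to absorb the factor $i\, \alpha' \wedge \overline{\alpha'}$ into the single $(p,0)$-conjugate expression $i\, \eta_a \wedge \overline{\eta_a}$ to expose weak positivity. With that observation in hand, summing the non-negative contributions together with the strictly positive $a=0$ term yields $Q(\alpha, J\alpha) < 0$ for every nonzero $\alpha$, so $Q$ is non-degenerate and $L_{[\chi]}$ is the desired isomorphism $H^1(F, \bR) \to H^{2n-1}(F, \bR)$.
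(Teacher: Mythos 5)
Your proof is correct, and its overall architecture is the same as the paper's: reduce to injectivity of $L^{n-1}_{[\chi]}$ on $H^1$ via Poincar\'{e} duality, pair $L^{n-1}\alpha$ against the conjugate piece of $\alpha$, and observe that every mixed term in the multinomial expansion is a weakly positive $(p,p)$-form (of the shape $i^{p^2}\eta\wedge\bar{\eta}$ with $p=2a+1$ odd) wedged against a strongly positive form, while the pure term is strictly positive. The one genuine difference is the choice of decomposition: the paper first replaces $[\chi]$ by the closed Hodge-theoretic representative $\omega_F+\eta+\bar{\eta}$, where $\omega_F$ is a K\"{a}hler form obtained from Proposition \ref{prop-LZ} (hence ultimately from Demailly--P\u{a}un) and $\eta$ is a holomorphic $2$-form, whereas you expand the non-closed type components $\chi^{2,0}+\chi^{1,1}+\chi^{0,2}$ of the Hermitian-Symplectic form itself, which is legitimate because the total forms $\chi$, $\alpha$, $J\alpha$ are closed and the positivity bookkeeping is pointwise. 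This buys you something: the positivity of $\chi^{1,1}$ is part of the definition of a Hermitian-Symplectic form, so your argument does not actually need Proposition \ref{prop-LZ} at all --- your citation of it is superfluous, since the K\"{a}hler metric used for the Hodge decomposition of $H^1(F)$ can be any K\"{a}hler metric on $F$, which exists by hypothesis. Your version also makes explicit the weak-versus-strong positivity pairing that the paper compresses into ``it is easy to verify.''
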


\begin{proof}
By Poincar\'{e} duality, $H^1(F, \bR)$ and $H^{2n-1}(F, \bR)$ have the same dimension. So it is enough to prove that $L_{[\chi]}: H^1(F, \bR)\rightarrow H^{2n-1}(F, \bR)$ is injective.

Because $F$ is K\"{a}hler, by the Hodge decomposition and Theorem \ref{prop-LZ}, there exists a K\"{a}hler form $\omega_F$ and a (closed) holomorphic 2-form $\eta$ such that $[\chi]=[\omega_F+\eta+\bar{\eta}]$. 
Moreover we have the Hodge decomposition: $H^1(F, \bC)=H^{1,0}(F)\oplus H^{0,1}(F)$. 
So any $\alpha\in H^1(F, \bR)$ can be represented by a sum $\xi+\bar{\xi}$ where $\xi\in H^0(F, \Omega^1_F)$ is a closed holomorphic 1-form. 

Assume that $L^{n-1}(\alpha)=0$. Then there exists an $(2n-2)$-form $\phi$ such that:
\begin{equation*}
(\omega_F+\eta+\bar{\eta})^{n-1}\wedge (\xi+\bar{\xi})=d\phi
\end{equation*}
From this, we get $(\omega_F+\eta+\bar{\eta})^{n-1}\wedge \xi\wedge \bar{\xi}=d\phi\wedge \bar{\xi}=d (\phi\wedge \bar{\xi})$.
Integrating both sides, we get:
\begin{equation*}
\int_F (\omega_F+\eta+\bar{\eta})^{n-1}\wedge (\sqrt{-1} \xi\wedge \bar{\xi})=\sqrt{-1} \int_F d(\phi\wedge \bar{\xi})=0.
\end{equation*} 
By using the positivity of $\omega_F$, it is easy to verify that the left-hand-side is greater than $\int_F \omega_F^{n-1}\wedge \sqrt{-1}\xi\wedge \bar{\xi}$ which implies that the $L^2$-norm of $\xi$ (with respect to $\omega_F$) is equal 0. So we conclude that $\xi=0$ and hence $\alpha=0$.  

\end{proof}

Let $(F, \chi)$ be any symplectic manifold. Motivated by the K\"{a}hler case, we set:
\begin{equation*}
H^2(F, \bR)_{\mathrm{prim}}=\mathrm{Ker}\left(L_{[\chi]}^{n-1}: H^2(F, \bR)\rightarrow H^{2n}(F, \bR)\right).
\end{equation*} 
Then it is easy to see that there is a direct sum decomposition:
\begin{equation*}
H^2(F, \bR)=H^2(F, \bR)=\bR[\chi]\oplus H^2(F, \bR)_{\mathrm{prim}}. 
\end{equation*}
Indeed, it is easy to verify that $\bR[\chi]\cap H^2(F, \bR)_{\mathrm{prim}}=\{0\}$ and any $\alpha \in H^2(F, \bR)$ decomposes as
\begin{equation*}
\alpha=\left(\frac{[\chi]^{n-1}\cdot \alpha}{[\chi]^n}[\chi]\right)+\left(\alpha-\frac{[\chi]^{n-1}\cdot \alpha}{[\chi]^n}[\chi]\right)\in \bR[\chi]+H^2(F, \bR)_{\mathrm{prim}}.
\end{equation*}
So we have a Lefschetz-type decomposition in degree 2 for any symplectic manifold (cf. \cite{AT14}).

\begin{prop}\label{prop-HSd2}
Let $\pi: X\rightarrow B$ be a holomorphic submersion. Assume that there is a closed form $Q$ on $X$ such that $(X_b, Q_b:= Q|_{X_b})$ is a symplectic manifold of Lefschetz type. Then any $x\in H^0(B, R^2\pi_*\bR)$ extends to a global class in $H^2(X,\bR)$. 
\end{prop}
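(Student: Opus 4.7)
The plan is to show that every class $x\in E_2^{0,2}=H^0(B,R^2\pi_*\bR)$ survives all Leray differentials, so that it lies in the image of the restriction map $H^2(X,\bR)\to E_\infty^{0,2}$. Since $d_r:E_r^{0,2}\to E_r^{r,3-r}$ has vanishing target for $r\geq 4$, it suffices to kill $d_2 x$ and $d_3 x$. Following Deligne's method, the main tool is cup product with $[Q]\in H^2(X,\bR)$: because $[Q]$ is a globally defined closed class, the operator $L:=[Q]\cup(-)$ preserves the Leray filtration and hence induces maps $L:E_r^{p,q}\to E_r^{p,q+2}$ commuting with every differential $d_r$. On the $E_2$-page, $L$ is fiberwise cup product with $[Q_b]$.

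First I would reduce to the case where $x$ is primitive. Fiber integration of the closed form $Q^n$ shows that $\int_{X_b}Q^n$ is a nonzero locally constant function on $B$, so $[Q_b]^n$ is a nowhere-vanishing flat section of $R^{2n}\pi_*\bR\cong\bR$. This yields a splitting of local systems $R^2\pi_*\bR=\bR\cdot[Q_b]\oplus(R^2\pi_*\bR)_{\mathrm{prim}}$, where primitivity means $L^{n-1}(-)=0$ in $R^{2n}\pi_*\bR$. Passing to flat sections and noting that the $[Q_b]$-component of $x$ tautologically extends to a scalar multiple of $[Q]\in H^2(X,\bR)$, I may assume $x$ is primitive, so that $L^{n-1}x=0\in E_2^{0,2n}$. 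For the vanishing of $d_2 x\in E_2^{2,1}$, the Lefschetz-type hypothesis says exactly that $L^{n-1}:R^1\pi_*\bR\to R^{2n-1}\pi_*\bR$ is an isomorphism of local systems, hence induces an isomorphism $E_2^{2,1}\to E_2^{2,2n-1}$; then $L^{n-1}(d_2 x)=d_2(L^{n-1}x)=0$ forces $d_2 x=0$.

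For $d_3 x\in E_3^{3,0}$ I would use $L^n$ in place of $L^{n-1}$. Since $[Q_b]^n\neq 0$, the map $L^n:\bR=R^0\pi_*\bR\to R^{2n}\pi_*\bR=\bR$ is an isomorphism of local systems, and so $L^n:E_2^{3,0}\to E_2^{3,2n}$ is an isomorphism. A short diagram chase, using that $L^n$ vanishes on $E_2^{1,1}$ because its $L^n$-target $E_2^{1,2n+1}=0$, together with the natural inclusion $E_3^{3,2n}\hookrightarrow E_2^{3,2n}$, shows that the induced map $\bar L^n:E_3^{3,0}\to E_3^{3,2n}$ is still injective. Then $\bar L^n(d_3 x)=d_3(L^n x)=0$ since $L^n x\in E_3^{0,2n+2}=0$, which forces $d_3 x=0$. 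Conceptually, the Lefschetz-type hypothesis on fiber $H^1$ pairs $E_2^{2,1}$ with its top-degree partner $E_2^{2,2n-1}$ to control $d_2$, while the bare symplectic positivity $[Q_b]^n\neq 0$ pairs $E_2^{3,0}$ with $E_2^{3,2n}$ to control $d_3$. The only real technical point is descending $L$ to higher pages, which is routine from multiplicativity of the Leray spectral sequence, so I do not anticipate a significant obstacle.
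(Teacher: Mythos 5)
Your proposal is correct and follows essentially the same route as the paper: reduce to primitive sections via the relative Lefschetz decomposition of $R^2\pi_*\bR$, then use that cup product with the global class $[Q]$ commutes with the Leray differentials together with the Lefschetz-type isomorphism $L^{n-1}:R^1\pi_*\bR\to R^{2n-1}\pi_*\bR$ to kill $d_2$, and the isomorphism $L^n:E_2^{3,0}\to E_2^{3,2n}$ to kill $d_3$. The only cosmetic difference is at the $E_3$-page, where you verify directly that the descended map $\bar L^n:E_3^{3,0}\to E_3^{3,2n}$ is injective, while the paper first notes $d_2:E_2^{1,1}\to E_2^{3,0}$ vanishes so that $E_3^{3,0}=E_2^{3,0}$; these amount to the same degree-reason argument.
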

\begin{proof}
By the discussion in section \ref{sec-Leray}, 
it is enough to prove that $d_2: E^{0,2}_2\rightarrow E^{2,1}_2$ are $d_3: E^{0,2}_3\rightarrow E^{3,0}_3$ are both 0. 
We will adapt Deligne's method for proving the degeneration of Leray spectral sequence at the $E_2$-page for projective submersions to the HS setting. 

By the same argument as in \cite[4.2.2]{Voi02}, because $Q$ is closed, it induces a morphism of local systems:
\begin{equation}
L:=[Q]\cup: R^k\pi_*\bR\rightarrow R^{k+2}\pi_*\bR. 
\end{equation}
which is equal to  $L_{[Q_b]}=[Q_b]\cup $ on the stalk at $b\in B$. For $k=1, 2$, we have relative Lefschetz decomposition:
\begin{eqnarray*}
&&R^1\pi_*\bR=(R^1\pi_*\bR)_{\mathrm{prim}}, \quad 
R^2\pi_*\bR=(L\cdot R^{0}\pi_*\bR) \oplus (R^2\pi_*\bR)_{\mathrm{prim}} 
\end{eqnarray*}
which induces the decomposition:
\begin{equation*}
H^0(B, R^2\pi_*\bR)=L\cdot H^0(B, R^0 \pi_*\bR)\oplus H^0(B, (R^2\pi_*\bR)_{\mathrm{prim}}).
\end{equation*}
Because $R^0\pi_*\bR=\bR$ is a constant local system, $L\cdot H^0(B, R^0\pi_*\bR)=\bR\cdot [Q]$ and it easy to see that $d_2: L\cdot H^0(B, R^0\pi_*\bR) \rightarrow H^2(B, R^1\pi_*\bR)$ is equal to 0. So to show $d_2=0$, it is enough to show that $d_2$ restricted to $H^0(B, (R^2\pi_*\bR)_{\mathrm{prim}})$ is equal to 0. 
Consider the following commutative diagram. 
\begin{equation}
\xymatrix{
H^0(B, (R^2\pi_*\bR)_{\mathrm{prim}}) \ar[d]_{d_2} \ar[r]^{L^{n-1}} & H^0(B, R^{2n}\pi_*\bR) \ar[d]^{d_2}\\
H^2(B, R^1\pi_*\bR) \ar[r]^{L^{n-1}}  & H^2(B, R^{2n-1}\pi_*\bR)}
\end{equation}
The top row is zero by the definition of $(R^2\pi_*\bR)_{\mathrm{prim}}$. The bottom row is an isomorphism by the assumption that $(X_b, Q_{X_b})$ is of Lefschetz type. So we see that $d_2$ on the left side is indeed 0.

By using similar argument, we can easily prove that $d_2: E^{1,1}_2\rightarrow E^{3,0}_2$ is also 0. So we get $E^{0,2}_3=E^{0,2}_2$ and $E^{3,0}_3=E^{3,0}_2$. Then we can use the same argument as above to prove that $d_3: E^{0,2}_3\rightarrow E^{3,0}_3=H^3(B, R^0\pi_*\bR)$ is also equal to 0. For example, we use the following commutative diagram to conclude that $d_3$ restricted to $E^{0,2}_3=H^0(B, (R^2\pi_*\bR)_{\mathrm{prim}})$ is equal to 0:
\begin{equation}
\xymatrix{
H^0(B, (R^2\pi_*\bR)_{\mathrm{prim}}) \ar[d]_{d_3} \ar[r]^{L^{n}=0} & H^0(B, R^{2n}\pi_*\bR) \ar[d]^{d_3}\\
H^3(B, R^0\pi_*\bR) \ar[r]^{L^{n}}_{\cong}  & H^2(B, R^{2n}\pi_*\bR)}.
\end{equation}
\end{proof}

\begin{proof}[Proof of Proposition \ref{prop-HSKahler}]
By Theorem \ref{thm-main2} and Proposition \ref{prop-HSd2}, we know that condition 1 and condition 2 are equivalent (under the Hermitian-Symplectic assumption), and it is well-known that they imply condition 3 and condition 4. 

We show that condition 3 implies condition 2. We know that $[Q]\in H^0(B, V^2)$ is a flat section of the local system $V^2=R^2\pi_*\bR$ with respect to the Gauss-Manin connection. If the associated variation of Hodge structure on $V^2$ is polarized (condition 3), then by Deligne's Hodge theory as explained in \ref{sec-Deligne} the $(1,1)$-component of $[Q]$ is also flat so that $[Q]^{1,1}\in H^0(B, V^2)$. By Proposition \ref{prop-LZ}, we know that $[Q]^{1,1}$ restricts to K\"{a}hler class on each fiber. But this means exactly condition 2. 

Finally we show that condition 4 implies $\mathbf{(Condition \; I)}$ of Theorem \ref{thm-main1}. 
If $X$ satisfies the $\partial\bar{\partial}$-lemma, then by the well-known result of \cite{DGMS} $X$ admits a Hodge decomposition and the restriction morphism $f_b^*: H^2(X, \bR)\rightarrow H^2(X_b, \bR)$ is a morphism of Hodge structures. 
Let $[Q]^{1,1}$ be the $(1,1)$-component of $[Q]$ in the Hodge decomposition of $H^2(X,\bC)$. Then $[Q]^{1,1}$ is represented by a closed $(1,1)$-form and it restricts to become the $(1,1)$-component of $[Q|_{X_b}]\in H^2(X_b, \bR)$. By Proposition \ref{prop-HSKahler}, $[Q]^{1,1}$ is K\"{a}hler. So by Theorem \ref{thm-main1} (or more easily by Remark \ref{rem-11case}), we can construct a K\"{a}hler form on the total space $X$. 


\end{proof}

\begin{proof}[Proof of Theorem \ref{thm-HSfiber}]
Assume that $Q$ is a Hermitian-Symplectic form on $X$. 

In case 1, we know that the Kodaira-Spencer class is 0. So by the formula \eqref{eq-KS} we know that $\theta=0=\bar{\theta}$ in the decomposition of Gauss-Manin connection in \eqref{eq-Ddecomp}. As a consequence, the Gauss-Manin connection preserves the decomposition $\cE^0(B, V^2)=\cE^0(B, V^2)^{2,0}\oplus \cE^0(B, V^2)^{1,1}\oplus \cE^0(B, V^2)^{0,2}$. So the $(1,1)$-component of the flat section $[Q]$ is also flat. By Proposition \ref{prop-LZ}, we know that $[Q]^{1,1}$ restricts to K\"{a}hler class on each fiber. So we conclude by Proposition \ref{prop-HSKahler}. 

In case 2, there is a natural polarization of $V^1_{\bC}=R^1\pi_*\bR_{\bC}=H^{1,0}\oplus H^{0,1}$. Indeed, for any $b\in X_b$, $\xi, \eta\in H^{1,0}(X_b)$, define:
\begin{equation*}
\langle \xi, \bar{\eta}\rangle=\sqrt{-1}\int_{X_b} \xi\wedge \bar{\eta}\wedge Q^{n-1}. 
\end{equation*}
Then because $Q$ is closed and $Q^{1,1}$ is positive definite, it is easy to see that defines a polarization on $V^1_{\bC}$, which also induces a polarization on $V^2=\wedge^2 V^1$. So we can conclude by Proposition \ref{prop-HSKahler}. 

In both case 3 and case 4, we know that the $(1,1)$-component of $[Q]|_{X_b}$ satisfies the condition 2 of Proposition \ref{prop-HSKahler}. We also remark that the dimension of $H^{2,0}(X_b)$ does not depend on $b\in B$ by the invariance of Hodge numbers of K\"{a}hler manifolds under deformations (see \cite[Proposition 9.20, vI.9.3.2]{Voi02}). 

\end{proof}

We end this paper by briefly dicussing a natural problem, which is based on communications with Weiyi Zhang. 
\vskip 1mm

\noindent
\textbf{Problem:} Extend the main results in this paper to more general holomorphic maps. In fact, in the case of elliptic fibrations (with singular fibers) in high dimensions, Nakayama had derived similar type of K\"{a}hlerian criterion based on a delicate study of such structures (see \cite{Nak99a, Nak99}). On the other hand, Thurston's construction of symplectic structures has been generalized to the setting of Lefschetz pencils by Gompf (\cite{Gom05}). Correspondingly, we expect that Zucker's generalization of Deligne's result in \cite{Zuc79} and a careful treatment near the singular fibers of Lefschetz pencils should lead to a generalization of the K\"{a}hlerian criteria in this paper.


\vskip 3mm
\noindent
Department of Mathematics, Rutgers University, Piscataway, NJ 08854-8019.

\noindent
{\it E-mail address:} chi.li@rutgers.edu

\end{document}